\setlist[enumerate]{leftmargin=.5in}
\setlist[itemize]{leftmargin=.5in}
\crefname{hypothesis}{Hypothesis}{Hypotheses}
\title{Optimal Carbon Emission Control With Allowances Purchasing\thanks{Submitted to the editors July 6, 2024.
\funding{The work of the first, third and fourth authors was supported by National Natural Science
Foundation of China grant 12071349. The work of the second author was supported by National
Natural Science Foundation of China grants 12071333 and 12101458.}}}
\author{Xinfu Chen\thanks{School of Mathematical Sciences, Key Laboratory of Intelligent Computing and Applications (Ministry of Education), Tongji University, Shanghai 200092, China, and Department of Mathematics, University of Pittsburgh, Pittsburgh, PA 15260 USA, and School of Mathematics, Southwestern University Of Finance And Economics, Chengdu 611130, China 
  (\email{xinfu@pitt.edu})}
\and Yuchao Dong\thanks{School of Mathematical Sciences, Key Laboratory of Intelligent Computing and Applications (Ministry of Education), Tongji University, Shanghai 200092, China 
  (\email{ycdong@tongji.edu.cn}).}
\and Wenlin Huang\thanks{Corresponding author. College of Science, University of Shanghai for Science and Technology,  Shanghai 200093, China 
  (\email{huangwenlin@usst.edu.cn}).}
  \and Jin Liang\thanks{School of Mathematical Sciences, Key Laboratory of Intelligent Computing and Applications (Ministry of Education), Tongji University, Shanghai 200092, China  
  (\email{liang\_jin@tongji.edu.cn}).}}
\begin{document}

\maketitle

% REQUIRED
\begin{abstract}
In this paper, we consider a company can simultaneously reduce its emissions and buy carbon allowances at any time. We establish an optimal control model involving two stochastic processes with two control variables, which is a singular control problem. This model can then be converted into a Hamilton-Jacobi-Bellman (HJB) equation, which is a two-dimensional variational equality with gradient barrier, so that the free boundary is a surface. We prove the existence and uniqueness of the solution. Finally, some numerical results are shown.
\end{abstract}

% REQUIRED
\begin{keywords}
optimal control, carbon reduction, free boundary problem, HJB equation, variational inequality.
\end{keywords}

% REQUIRED
\begin{MSCcodes}
35R35, 91G80
\end{MSCcodes}

\section{Introduction}

The escalating impact of the greenhouse effect on global climate change has led to increasingly severe consequences. In 2023, the world witnessed its hottest summer on record since data collection began in 1940, as reported by the European Union’s climate monitoring agency. The frequency of extreme weather events and natural disasters, such as heatwaves, droughts, floods, and wildfires, has surged. Recognizing the urgent need to address climate change's repercussions on the global economy and society, international agreements, including the United Nations Framework Convention on Climate Change, the Kyoto Protocol, and the Paris Agreement, have been adopted since 1992. To date, major economies worldwide, including United States, China and European Union,  have each put forward their own carbon emission reduction plans.

In summary, the global climate governance framework has progressively solidified. In the face of uncertainties on carbon emissions and allowance prices, efficiently utilizing limited resources becomes a crucial mathematical challenge in carbon reduction research. Currently, many scholars are delving into comprehensive studies on optimizing strategies for carbon emission reduction. For example, Yang et al.\cite{A1} develop a multi-stage mixed integer nonlinear programming model to minimize the trading cost and implementation cost of green technology under cap-and-trade schemes. Bourgey et al.\cite{A2} combine climate risk with credit risk and establish an optimization model of an enterprise's emission level under the dual goals of maximizing production profits and complying with emission reduction scenarios. Wang et al.\cite{A3} establish a framework to determine the scale of investment required for each pathway by analyzing the emission reduction pathways of enterprises. Rubin\cite{A4}, Schennach\cite{A5} and Liski\cite{A6} study the optimal emission reduction policies for firms that could store emission allowances over multiple trading periods. Yang and Liang \cite{A7} study the optimal control of carbon emission reduction from the perspective of undefined equity for the first time. They add control factors into the undefined equity pricing model under the consumption framework of Merton\cite{A8,A9} and establish the optimal control model of carbon emission reduction. Later on, Yang et al.\cite{A10} establish two models based on two emission reduction strategies and derive the corresponding HJB equation according to the dynamic programming principle \cite{A11,A12,A13}. They  prove the existence and uniqueness of  classical solutions with the theory of parabolic PDEs. On this basis, Guo and Liang \cite{A14} consider carbon trading problem and assume that the allowance price on the market satisfies the geometric Brownian motion. Then, they establishe a stochastic control model to minimize the total cost of emission reduction, and obtain the semi-closed solution of the corresponding HJB equation. In addition, Guo and Liang \cite{A15} also considere the penalty for excess emissions and the upper limit of emission reduction capacity, and prove the existence and uniqueness of the viscosity solution of the HJB equation corresponding to the problem. Then, based on Guo's work, Liang and Huang \cite{A16} study the impact of intertemporal trading on the company's emission reduction cost and emission reduction strategy. In addition, in order to study the influence of Poisson jump on the investment timing of carbon emission reduction technologies, based on the real option theory, Huang et al.\cite{A17} establish the optimal investment timing model for carbon emission reduction technology when the investment cost satisfy a compound Poisson process, and the semi-closed solution of the optimal implementation boundary was obtained by using the iteration technique of the basic function. In addition, Liang and Huang \cite{A18} studied the stochastic optimization control problem of carbon emission reduction, and conducted theoretical research on the corresponding HJB equation through viscosity solution.

Recently, Huang et al.\cite{A19} study a stochastic optimization control problem for carbon emission reduction with two control variables, one of which is the emission reduction control strategy, and the other is the purchase amount of carbon allowances. The existence and uniqueness of classical solution of the model and the optimal value of the model are obtained. However, in previous work\cite{A19}, the purchase and auction of carbon allowances are assumed to be carried out at a specific time point, while the actual trading time is usually not fixed, and companies may participate in the trading of allowances at multiple time points. In this way, while reducing emissions, enterprises should also take into account the price of allowances in the current market, and buy or sell allowances at any time, so that the total cost of emission reduction is lowest. Therefore, in this paper, we establish an optimal control model in which the company can simultaneously reduce its emissions and buy carbon allowances at any time, so that the model has two control variables, one of which is the stochastic emission reduction control strategy, and the other is the purchase amount of carbon allowances. We then derive the corresponding HJB equation, which is a parabolic variational inequality on two dimensional state space with gradient constraint, so that the free boundary is a two-dimensional surface. To show that the model makes rational sense, we need to prove the existence and uniqueness of the solution. Finally, some properties of the free boundary surface are discussed and some numerical results are shown.

The remainder of the paper is organized as follows: In Section \ref{sect_prelim}, we introduce the basic setup for our optimal control problem and derive the related HJB equation, which is a variational inequality. By considering the penalized equation, we show the existence of solution in Section \ref{sect_exist}. In Section \ref{sect_uniq}, we prove the comparison principle for the related equation, which implies the uniqueness of solution. Some numerical results are demonstrated in Section \ref{sect_numer}.

\section{Preliminaries}\label{sect_prelim}
Within the production process, an enterprise generates carbon dioxide and is obliged to manage the quantity of carbon emissions.  It is assigned a certain amount of  emission allowance. In the end of a stage, should the total emissions surpass the allocated quota, the company must acquire additional quotas from the market. Through the time period, the enterprise can opt to reduce emissions through internal measures or increase its emissions quota by purchasing carbon allowances in the carbon trading market. Moreover, based on the current market conditions, we also assume that the company is not eligible to sell carbon quotas. The company aims to minimize the total cost of carbon emissions by striking a balance between implementing internal emission reduction measures and purchasing carbon allowances in the carbon market.
\subsection{Problem Formulation}
The mathematical modelling for the problem is demonstrated as follows. Let $(\Omega,\mathscr{F},\mathbb{P})$ be a probability space on which two independent standard Brownian motions $W$ and $Z$ are defined. The $\mathbb{F}=\{\mathscr{F}_t,t>0\}$ is the natural  filtration generated by $W$ and $Z$, which satisfies the usual conditions.  Let $A^1$ denote the set of all $\mathbb{F}$-adapted non-negative processes, representing the rate of reduction of carbon emissions through the enterprise's internal measures.  Similarly, denote by $A^2$ the totality of all $\mathbb{F}$-adapted right continuous increasing processes,  representing the cumulative purchased quota. 

We assume that the unit price $S$ of carbon allowance quota follows Geometric Brownian Motion, i.e. 
\begin{align}\label{1}
dS_\theta=S_\theta(\mu d\theta+\sigma dW_\theta).
\end{align}
Given $a\in A^1$ and $b \in A^2$, the carbon emission surplus $X$ satisfies the following stochastic equation  
\begin{align}\label{2}
dX_\theta=-a_\theta d\theta-db_\theta+\nu dZ_\theta.
\end{align}
Starting from time $t$ with initial condition $X_t=x$ and $S_t=s$, the total cost for the enterprise can be represented by
\begin{equation}
C(x,s,t;a,b)=X_T^+S_Te^{-r(T-t)}+\int_t^Te^{-r(\theta-t)}S_\theta db_\theta+\int_t^Te^{-r(\theta-t)}c(a_\theta)d\theta,
\end{equation}
where $r>0$ indicates the discount rate. The equation above encompasses three cash flows on the right side. The first part represents the cost of purchasing quota if the emissions exceed the quota at time $T$. The second part accounts for the cost of purchasing quota during the interval 
$[t,T]$, and the third part signifies the cost of the enterprise's internal measures. In this context, we assume that the internal control cost is a function of the reduction rate and is defined as $c(a)=\frac{m}{2}a^2 $, where $m$ is a positive constant. Such an assumption has also been utilized in \cite{A10}. The goal of the company is to  minimize the expected discounted  cost
\begin{align}\label{4}
p(x,s,t)=\inf_{a\in A^1,b\in A^2}\mathbb E[C(x,s,t;a,b)|X_t=x,S_t=s],
\end{align}
for any $(x,s)\in\mathbb{R}\times\mathbb{R}^+,0\leq t\leq T$.

{\bf Notation} In this paper, we
use $C$ to represent a constant, but may be different from line to line. In general, it depends on the coefficients of the model. In the proof of theorems, we use the notation $C(\cdot)$ to indicate its dependence on other quantities. For example, $C(\varepsilon)$ means that the constant depends also on $\varepsilon$.
\subsection{Basic Property of the Value Function}
Before solving the value function $p(x,s,t)$, let us first study the basic properties of the value function.
\begin{lemma}
In the distributional sense, we have that
\[0\leq \frac{\partial p}{\partial x}\leq se^{(\mu-r)t}, \quad 0\leq\frac{\partial p}{\partial s}\leq\frac{p}{s}.\]
\end{lemma}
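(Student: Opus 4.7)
All four inequalities can be derived from one simple template: use an $\varepsilon$-optimal control at one initial condition to bound the value at a perturbed initial condition, exploiting the fact that $C$ depends on $x$ only through the terminal penalty $X_T^+ S_T e^{-r(T-t)}$ and on $s$ only through the GBM path $S_\theta$, which scales multiplicatively under (\ref{1}).

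\textbf{Monotonicity ($\partial p/\partial x\ge0$ and $\partial p/\partial s\ge0$).} Fix any admissible pair $(a,b)$. Increasing $x$ to $x+\delta$ with $\delta>0$ translates the entire trajectory $X_\theta$ upward by $\delta$; then $X_T^+ S_T e^{-r(T-t)}$ is nondecreasing in this shift while the other two terms of $C$ are unchanged, so $C$ can only increase. Taking the infimum in $(a,b)$ gives the first bound. Similarly, for $\lambda\ge 1$ the scaling $s\mapsto \lambda s$ scales the whole GBM path by $\lambda$ (pathwise, since $S_\theta/S_t$ is independent of $S_t$), so the first two cost terms scale by $\lambda$ and the third is untouched, giving $p(x,\lambda s,t)\ge p(x,s,t)$.

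\textbf{Upper bound on $\partial p/\partial x$.} Pick $(a,b)$ that is $\varepsilon$-optimal at $(x,s,t)$ and apply it at $(x+\delta,s,t)$. Using $(y+\delta)^+-y^+\le\delta$ on the terminal term and noting that $\mathbb{E}[S_T\mid S_t=s]=s e^{\mu(T-t)}$ from (\ref{1}),
\begin{equation*}
p(x+\delta,s,t)-p(x,s,t)\;\le\;\delta\,\mathbb{E}\bigl[S_T e^{-r(T-t)}\,\big|\,S_t=s\bigr]+\varepsilon\;=\;\delta\cdot s\,e^{(\mu-r)(T-t)}+\varepsilon.
\end{equation*}
Dividing by $\delta$ and letting $\varepsilon\to 0$ yields the stated one-sided bound.

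\textbf{Upper bound on $\partial p/\partial s$.} For $\lambda\ge 1$ the pathwise scaling plus nonnegativity and $s$-independence of $c(a_\theta)$ give
\begin{equation*}
C(x,\lambda s,t;a,b)\;=\;\lambda\Bigl[X_T^+ S_T e^{-r(T-t)}+\int_t^T e^{-r(\theta-t)}S_\theta\,db_\theta\Bigr]+\int_t^T e^{-r(\theta-t)}c(a_\theta)\,d\theta\;\le\;\lambda\,C(x,s,t;a,b).
\end{equation*}
Taking an $\varepsilon$-optimal control at $(x,s,t)$, passing to expectations, and letting $\varepsilon\to 0$ gives $p(x,\lambda s,t)\le \lambda p(x,s,t)$, equivalently $p(x,\lambda s,t)-p(x,s,t)\le(\lambda-1)p(x,s,t)$. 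Dividing by $(\lambda-1)s$ and sending $\lambda\to 1^+$ delivers $\partial p/\partial s\le p/s$.

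\textbf{From one-sided quotient bounds to distributional derivatives.} The four inequalities above show $p$ is locally Lipschitz in $x$ and in $s$ (for $s$ bounded away from $0$ and $\infty$, using the local boundedness of $p$). By Rademacher's theorem the classical partial derivatives exist a.e.\ and coincide with the distributional ones, and they inherit the same bounds.

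\textbf{Main obstacle.} The subtlest point is the scaling step for $\partial p/\partial s$: the inequality $C(x,\lambda s,t;a,b)\le \lambda C(x,s,t;a,b)$ holds only for $\lambda\ge 1$ and only because the control cost $\int e^{-r(\theta-t)}c(a_\theta)\,d\theta$ is both nonnegative and independent of $s$, so that enlarging it by the factor $\lambda$ is a valid overestimate. One must also check admissibility: the classes $A^1,A^2$ are defined without reference to the initial data, so the same $\varepsilon$-optimal $(a,b)$ used at $(x,s,t)$ is legitimate at $(x+\delta,s,t)$ and at $(x,\lambda s,t)$, keeping all four perturbation arguments clean.
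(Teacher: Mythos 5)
Your proposal is correct and follows essentially the same argument as the paper: compare the cost under a fixed (or $\varepsilon$-optimal) control at the shifted initial data, using $(y+\delta)^+-y^+\le\delta$ together with $\mathbb{E}[S_T\mid S_t=s]=se^{\mu(T-t)}$ for the $x$-derivative, and the pathwise multiplicative scaling of the GBM plus nonnegativity of the $s$-independent control cost for the sandwich $C(x,s,t;a,b)\le C(x,\lambda s,t;a,b)\le\lambda C(x,s,t;a,b)$. The only differences are cosmetic (explicit $\varepsilon$-optimal controls instead of taking the infimum after a pointwise comparison, and the explicit Lipschitz/Rademacher step to pass to distributional derivatives, which the paper leaves implicit); note also that your derived bound $se^{(\mu-r)(T-t)}$ matches the paper's proof, the exponent $e^{(\mu-r)t}$ in the lemma's statement being an apparent typo.
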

\begin{proof}
Fix $a\in A^1$ and $b\in A^2$. Conditioning on $X_t=x$, we have
\[X_\theta=x+\nu(Z_\theta-Z_t)-\int_t^\theta a_\tau d\tau-(b_\theta-b_t).\]
Hence, for any $h>0$, denoting by $C(x,s,t;a,b)$ and $C(x+h,s,t;a,b)$ the cost conditioned on $(X_t,S_t)=(x,s)$ and $(X_t,S_t)=(x+h,s)$ respectively, we see that
\[0\leq C(x+h,s,t;a,b)-C(x,s,t;a,b)\leq hS_Te^{-r(T-t)}.\]
Taking expectation, it holds that
\[0\leq \mathbb E[C(x+h,s,t;a,b)]-\mathbb E[C(x,s,t;a,b)]\leq he^{-r(T-t)}\mathbb E[S_T]=hse^{(\mu-r)(T-t)}.\]
Taking the infimum  with respect to $(a,b)$, it is easy to obtain that 
\[0\leq \frac{p(x+h,s,t)-p(x,s,t)}{h}\leq se^{(\mu-r)(T-t)} \quad \forall x\in\mathbb{R},s>0,t\leq T.\]
Letting $h\rightarrow 0$, we have
\[0\leq \frac{\partial p}{\partial x}\leq se^{(\mu-r)t}.\]

Since $S_\theta=S_t e^{(\mu-\frac{\sigma^2}{2})(\theta-t)+\sigma(W_\theta-W_t)}$, we get  that, for any $h>0$,
\[C(x,s,t;a,b)\leq C(x,(1+h)s,t;a,b)\leq(1+h)C(x,s,t;a,b).\]
It follows that
\[p(x,s,t)\leq p(x,(1+h)s,t)\leq(1+h)p(x,s,t)\quad \forall x\in\mathbb{R},s>0,t\leq T,h>0,\]
which is equivalent to 
\[0\leq \frac{p(x,(1+h)s,t)-p(x,s,t)}{sh}\leq \frac{hp(x,s,t)}{sh} \quad \forall x\in\mathbb{R},s>0,t\leq T.\]
Letting $h\rightarrow 0$, we have
\[0\leq\frac{\partial p}{\partial s}\leq\frac{p}{s}.\]
\end{proof}

\begin{lemma}
\[0\leq p(x,s,t)\leq se^{(\mu-r)(T-t)}\Big[x^++\frac{v\sqrt{T-t}}{\sqrt{2\pi}}\Big].\]
\end{lemma}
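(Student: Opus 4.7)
The plan is to establish the lower and upper bounds separately, with the lower bound being immediate and the upper bound obtained by evaluating the cost at an explicit admissible strategy.

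For the lower bound $p \geq 0$, I observe that all three terms in $C(x,s,t;a,b)$ are nonnegative: $X_T^+ S_T e^{-r(T-t)} \geq 0$ since $S_T > 0$ almost surely; the Stieltjes integral $\int_t^T e^{-r(\theta-t)} S_\theta\,db_\theta \geq 0$ because $b$ is increasing and $S_\theta > 0$; and the internal cost $c(a_\theta) = \frac{m}{2} a_\theta^2 \geq 0$. Taking expectations and infimum preserves nonnegativity.

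For the upper bound, I would plug in the trivial strategy $a \equiv 0$, $b \equiv b_t$ (do nothing: no reduction, no purchases), which clearly lies in $A^1 \times A^2$. Under this strategy the last two cost terms vanish and $X_T = x + \nu(Z_T - Z_t)$, so
\begin{equation*}
p(x,s,t) \leq e^{-r(T-t)}\,\mathbb{E}\bigl[X_T^+ S_T \bigm| X_t=x, S_t=s\bigr].
\end{equation*}
Because $W$ and $Z$ are independent, $S_T$ and $X_T$ are independent conditional on $(X_t,S_t)=(x,s)$, so the expectation factors as $\mathbb{E}[X_T^+]\cdot \mathbb{E}[S_T] = \mathbb{E}[X_T^+]\cdot s e^{\mu(T-t)}$ by the usual GBM moment.

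It then remains to bound $\mathbb{E}[X_T^+]$ where $X_T \sim N(x,\nu^2(T-t))$. I would use the pointwise inequality $(x+y)^+ \leq x^+ + y^+$ with $y = \nu(Z_T - Z_t)$ to get
\begin{equation*}
\mathbb{E}[X_T^+] \leq x^+ + \nu\,\mathbb{E}[(Z_T-Z_t)^+] = x^+ + \nu\sqrt{T-t}\,\mathbb{E}[\xi^+],
\end{equation*}
where $\xi \sim N(0,1)$. A direct computation gives $\mathbb{E}[\xi^+] = \int_0^\infty z\,\frac{e^{-z^2/2}}{\sqrt{2\pi}}\,dz = \frac{1}{\sqrt{2\pi}}$, and combining the pieces yields the stated bound. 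No step here is truly an obstacle; the only subtlety worth flagging is the appeal to independence of $S_T$ and $X_T$ under the zero-control strategy, which relies on the independence of $W$ and $Z$ built into the model.
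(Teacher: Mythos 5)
Your proposal is correct and follows essentially the same route as the paper: bound $p$ by the cost of the do-nothing strategy $a\equiv 0$, $b\equiv 0$, factor the expectation using the independence of $W$ and $Z$, and evaluate the resulting Gaussian quantities. The only cosmetic difference is that you invoke the subadditivity $(x+y)^+\le x^+ + y^+$ and the moment $\mathbb{E}[\xi^+]=1/\sqrt{2\pi}$, whereas the paper splits the Gaussian integral explicitly (obtaining the same bound, in fact with an extra factor $e^{-x^2/(2\nu^2(T-t))}$ that it then discards).
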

\begin{proof}
The first inequality is trivial, as $C(x,s,t;a,b)$ is always non-negative. 
Taking the trivial strategy $a\equiv0$ and $b\equiv0$, we see that
\begin{align}
p(x,s,t)&\leq \mathbb  E[(x+\nu(Z_T-Z_t))^+se^{(\mu-\frac{\sigma^2}{2})(T-t)+\sigma(W_T-W_t)-r(T-t)}]\notag\\
&=se^{(\mu-r)(T-t)}\int_{-\infty}^\infty(x+\nu\sqrt{T-t}\xi)^+\frac{1}{\sqrt{2\pi}}e^{-\frac{\xi^2}{2}}d\xi\notag\\
&=se^{(\mu-r)(T-t)}\Big[\int_{-\frac{x}{\nu\sqrt{T-t}}}^\infty x\frac{1}{\sqrt{2\pi}}e^{-\frac{\xi^2}{2}}d\xi+\nu\sqrt{T-t}\int_{-\frac{x}{\nu\sqrt{T-t}}}^\infty \xi\frac{1}{\sqrt{2\pi}}e^{-\frac{\xi^2}{2}}d\xi\Big]\notag\\
&\leq se^{(\mu-r)(T-t)}\Big[x^++\frac{\nu\sqrt{T-t}}{\sqrt{2\pi}}e^{-\frac{x^2}{2\nu^2(T-t)}}\Big].\notag
\end{align}
The lemma is proved.
\end{proof}

\begin{lemma}
For fixed $s>0$, $t<T$, $p(\cdot,s,t)$ is a convex function. 
\end{lemma}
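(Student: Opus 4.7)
The plan is to exploit three structural facts: (i) the dynamics \eqref{2} are affine in the initial condition $x$ and in the controls $(a,b)$; (ii) the terminal cost $X_T^+ S_T e^{-r(T-t)}$ is convex in $X_T$, the running cost $c(a)=\frac{m}{2}a^2$ is convex in $a$, and $\int_t^T e^{-r(\theta-t)} S_\theta\, db_\theta$ is linear in $b$; and (iii) both $A^1$ and $A^2$ are convex cones, so convex combinations of admissible controls remain admissible.

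Concretely, I would fix $x_1, x_2 \in \mathbb{R}$ and $\lambda \in (0,1)$, and set $x := \lambda x_1 + (1-\lambda) x_2$. For arbitrary $\varepsilon > 0$, choose $(a^i, b^i) \in A^1 \times A^2$ that are $\varepsilon$-optimal for the starting point $(x_i, s, t)$, i.e.\
\[\mathbb{E}[C(x_i, s, t; a^i, b^i)] \leq p(x_i, s, t) + \varepsilon, \qquad i=1,2,\]
and form
\[(a,b) := \lambda(a^1, b^1) + (1-\lambda)(a^2, b^2).\]
Admissibility of $(a,b)$ is immediate: non-negativity of $a$ and right-continuous increasingness of $b$ are preserved by convex combinations. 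Using the same paths of $Z$, linearity of \eqref{2} gives
\[X^{x,a,b}_\theta = \lambda X^{x_1, a^1, b^1}_\theta + (1-\lambda) X^{x_2, a^2, b^2}_\theta, \qquad \theta \in [t,T].\]

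Convexity of $y \mapsto y^+$, convexity of $c$, and the $b$-linearity of the integral against $S_\theta$ then yield the pathwise inequality
\[C(x, s, t; a, b) \leq \lambda\, C(x_1, s, t; a^1, b^1) + (1-\lambda)\, C(x_2, s, t; a^2, b^2).\]
Taking expectations and invoking $\varepsilon$-optimality,
\[p(x, s, t) \leq \lambda\, p(x_1, s, t) + (1-\lambda)\, p(x_2, s, t) + \varepsilon,\]
and letting $\varepsilon \to 0$ closes the argument.

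I do not foresee any real obstacle: the proof is purely structural and relies only on the linear-convex structure of the state and cost. The single point requiring care is that both candidate strategies must live on the same filtered probability space driven by the same $(W, Z)$, so that the convex combination is itself an admissible control whose state is the corresponding convex combination of trajectories; this is automatic from the formulation of \eqref{4}.
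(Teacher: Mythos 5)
Your proposal is correct and follows essentially the same route as the paper: both exploit the affine dependence of the state on $(x,a,b)$ for a fixed Brownian path, the convexity of $y\mapsto y^+$ and of $c(a)=\frac{m}{2}a^2$, and the linearity in $b$ of the purchase cost. The only cosmetic difference is that you pass through $\varepsilon$-optimal controls and let $\varepsilon\to 0$, whereas the paper works with arbitrary admissible pairs and takes the infimum over both at the end; these are interchangeable.
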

\begin{proof}
Choosing any $x_1\in\mathbb{R}$, $x_2\in\mathbb{R}$ and $\xi\in[0,1]$, set $x_3=\xi x_1+(1-\xi)x_2$. Let $(a_1,b_1)$ and $(a_2,b_2)$ be two strategies in $A^1\times A^2$ and construct a new strategy $(a_3,b_3)$ as 
\[a_3=\xi a_1+(1-\xi)a_2,\quad b_3=\xi b_1+(1-\xi)b_2.\]
Denote by $\{X_\theta^i\}_{t\leq \theta\leq T},i=1,2,3,$ be the solution of \eqref{2} under the strategy $(a_i,b_i)$ with initial condition $S_t=s$ and $X_t^i=x_i$. Then we see that
\[X_t^3=\xi X_t^1+(1-\xi)X_t^2.\]
Since the functions $f(x):=x^+$ and $g(a):=a^2$ are convex, we see that
\[C(x_3,s,t;a_3,b_3)\leq \xi C(x_1,s,t;a_1,b_1)+(1-\xi)C(x_2,s,t;a_2,b_2).\]
This implies that
\[p(x_3,s,t)\leq \mathbb E[C(x_3,s,t;a_3,b_3)]\leq \xi \mathbb E[C(x_1,s,t;a_1,b_1)]+(1-\xi)\mathbb E[C(x_2,s,t;a_2,b_2)].\]
Taking the infimum over $(a_1,b_1)$ and $(a_2,b_2)$, we then obtain
\[p(\xi x_1+(1-\xi)x_2,s,t)\leq\xi p(x_1,s,t)+(1-\xi)p(x_2,s,t).\]
This means that $p(\cdot,s,t)$ is convex in the first variable.
\end{proof}
\subsection{HJB Variational Inequality}
From standard optimal stochastic control theory, one can derive that 
the value function $p$ is related to the following variational inequality:
\begin{align}\label{888}
\begin{cases}
\max\{\mathcal M[p](x,s,\theta),\frac{\partial p}{\partial x}(x,s,\theta)-s\}=0 \quad \forall x\in\mathbb{R},s>0, \theta\leq T,\\
p(x,s,T)=x^+s \quad \forall x\in\mathbb{R},s>0,
\end{cases}
\end{align}
where
\[\mathcal M [p](x,s,\theta)=-\frac{\partial p}{\partial \theta}-\frac{\sigma^2}{2}s^2\frac{\partial^2p}{\partial s^2}-\mu s\frac{\partial p}{\partial s}+rp-\frac{\nu^2}{2}\frac{\partial^2p}{\partial x^2}+\frac{1}{2m}\Big(\frac{\partial p}{\partial x}\Big)^2.\]
In fact, we are going to prove the solution of (\ref{888}) is just the $p$ defined in (\ref{4}), i.e., we shall have the following verification theorem.
\begin{theorem}
Let $\Phi$ be a $C^{2,1}$ solution of \eqref{888} and satisfying the growth condition, for some constant $C$ and  $n\in\mathbb{N}$,
\begin{equation}\label{growth_condi}
    \Big|\frac{\partial \Phi}{\partial x}\Big|+\Big|\frac{\partial \Phi}{\partial s}\Big|\leq C(s^n+s^{-n})\quad \forall x\in\mathbb{R},s>0,t\in[0,T].
\end{equation}
Then, $\Phi(x,s,t) \le p(x,s,t)$. Denote by $Q_1:=\{(x,s,t)| \frac{\partial \Phi}{\partial x}(x,s,t) < s\}$. For any $(x,s,t) \in \bar Q_1$, assume that there exists $a^* \in A^1$ and $b^* \in A^2$ with continuous paths such that 
\begin{enumerate}
    \item $a^*_\theta= \frac{1}{m} \frac{\partial \Phi}{\partial x}(X^*_\theta,S_\theta,\theta)$ a.s.;
    \item $(X^*_\theta,S_{\theta},\theta) \in \bar Q_1$ for all $\theta \in [t,T]$ almost surely;
    \item $\int_t^T {\bf 1}_{\{(X^*_\theta,S_\theta,\theta) \in Q_1\}} db^*_{\theta}=0$, a.s.,
\end{enumerate}
where $X^*$ is the solution of \eqref{2} with control $(a^*,b^*)$. Then, $\Phi(x,s,t)=\mathbb E\left[ C(x,s,t;a^*,b^*)\right]$, which implies that $(a^*,b^*)$ is the optimal control. For $(x,s,t) \notin \bar Q_1$, the optimal control should immediately buy $\Delta$ units emission quota such that $(x-\Delta,s,t) \in \bar Q_1$ and apply $(a^*,b^*)$. 
\end{theorem}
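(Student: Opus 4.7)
The plan is to apply Itô's formula to $e^{-r(\theta-t)}\Phi(X_\theta,S_\theta,\theta)$ along an arbitrary admissible pair $(a,b)\in A^1\times A^2$ and then exploit the two inequalities built into the variational inequality \eqref{888} to show $\Phi\le p$. Since $b$ is right-continuous increasing (hence singular), I would split $b$ into its continuous part $b^c$ and its jumps and use the jump form of Itô's formula; the jump contribution reads $\Phi(X_\tau,S_\tau,\tau)-\Phi(X_{\tau-},S_\tau,\tau) = -\int_0^{\Delta b_\tau}\Phi_x(X_{\tau-}-y,S_\tau,\tau)\,dy$, which together with the gradient constraint $\Phi_x\le s$ is bounded below by $-S_\tau\,\Delta b_\tau$.

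Collecting the drift terms and using $\mathcal{M}[\Phi]\le 0$, one obtains
\begin{equation*}
-r\Phi+\Phi_\theta-a\Phi_x+\tfrac{\nu^2}{2}\Phi_{xx}+\mu S\Phi_s+\tfrac{\sigma^2}{2}S^2\Phi_{ss}\;\ge\;-\mathcal{M}[\Phi]+\tfrac{1}{2m}(\Phi_x-ma)^2-\tfrac{m}{2}a^2\;\ge\;-c(a),
\end{equation*}
while the singular-control contribution satisfies $-\Phi_x\,db\ge -S\,db$ on both its continuous and jump parts. Integrating from $t$ to $T$, invoking the terminal condition $\Phi(x,s,T)=x^+s$, and taking expectation after a standard stopping-time localization (the growth condition \eqref{growth_condi} together with the fact that $S$ and $X$ admit finite moments of all orders renders the stochastic integrals $\int\nu\Phi_x\,dZ$ and $\int\sigma S\Phi_s\,dW$ genuine martingales in the limit) then yields $\Phi(x,s,t)\le\mathbb{E}[C(x,s,t;a,b)]$, and infimizing over $(a,b)$ gives $\Phi\le p$.

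For the equality assertion, I would plug the candidate $(a^*,b^*)$ into the same computation and track when each inequality saturates. The choice $a^*_\theta=\tfrac{1}{m}\Phi_x(X^*_\theta,S_\theta,\theta)$ forces $\tfrac{1}{2m}(\Phi_x-ma^*)^2=0$ pointwise; condition (2) that $(X^*_\theta,S_\theta,\theta)\in\bar Q_1$, combined with the fact that $\mathcal{M}[\Phi]=0$ on $Q_1$ (since the max in \eqref{888} is then attained by the first argument), extends by continuity to $\mathcal{M}[\Phi]=0$ along the entire trajectory; and condition (3) that $b^*$ charges mass only where $\Phi_x=s$ forces the singular-control inequality $\Phi_x\,db^*\le S\,db^*$ to be an equality. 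All the inequalities become equalities, giving $\Phi(x,s,t)=\mathbb{E}[C(x,s,t;a^*,b^*)]\ge p(x,s,t)$, so $\Phi=p$ and $(a^*,b^*)$ is optimal. For $(x,s,t)\notin\bar Q_1$, the set $\bar Q_1^{\,c}$ lies inside $\{\Phi_x=s\}$, hence $\Phi(x,s,t)-\Phi(x-\Delta,s,t)=\int_{x-\Delta}^{x}\Phi_x(y,s,t)\,dy=\Delta s$ as long as the segment stays in $\bar Q_1^{\,c}$; an instantaneous purchase of the smallest such $\Delta$ brings the state into $\bar Q_1$ at zero net marginal cost, after which the construction of the previous paragraph applies.

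The main obstacle I expect is the careful bookkeeping of the singular part: verifying the jump formula for $\Phi$ at purchase times, handling the initial impulse in the case $(x,s,t)\notin \bar Q_1$ (which requires $\Phi_x$ to remain equal to $s$ along the entire jump segment), and upgrading the local martingales to true martingales using the growth assumption on $\Phi_x$ and $\Phi_s$ together with the finite moments of geometric Brownian motion. A secondary subtlety is justifying the continuity of $\mathcal{M}[\Phi]$ up to $\partial Q_1$ to conclude $\mathcal{M}[\Phi]=0$ on $\bar Q_1$; this follows from $\Phi\in C^{2,1}$.
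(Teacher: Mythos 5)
Your proposal is correct and follows essentially the same route as the paper: Itô's formula for $e^{-r(\theta-t)}\Phi$ with the jump/continuous decomposition of $b$, the gradient constraint $\Phi_x\le s$ to bound the singular and jump contributions, completing the square in $a$ together with $\mathcal{M}[\Phi]\le 0$ to get $\Phi\le\mathbb{E}[C(x,s,t;a,b)]$, the growth condition to justify the martingale property, and saturation of every inequality under conditions (1)--(3) for the candidate $(a^*,b^*)$. Your explicit treatment of the initial impulse when $(x,s,t)\notin\bar Q_1$ (using $\Phi_x=s$ along the jump segment so the purchase is cost-neutral) is a detail the paper states but does not spell out, and it is handled correctly.
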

\begin{proof}
Given $(a,b)\in A^1\times A^2$, $\{(X_\theta,S_\theta)\}_{\theta\geq 0}$ is the solution of \eqref{1} and \eqref{2} subject to $X_t=x$, $S_t=s$. By It\^{o}'s formula, 
\begin{align}\label{ito1}
&e^{-r(T-t)}\Phi(X_T,S_T,T)-\Phi(x,s,t)\notag\\
=&\int_t^Te^{-r(\theta-t)}\Big[\Big(\frac{\partial\Phi}{\partial \theta}+\mu S_\theta\frac{\partial\Phi}{\partial s}+\frac{1}{2}\sigma^2S_\theta^2\frac{\partial^2\Phi}{\partial s^2}-a_\theta\frac{\partial\Phi}{\partial x}+\frac{\nu^2}{2}\frac{\partial^2\Phi}{\partial x^2}-r\Phi\Big)d\theta\notag\\
&-\int_t^T e^{-r(\theta-t)}\frac{\partial\Phi}{\partial x}db^c_\theta +\int_t^Te^{-r(\theta-t)}\Big[\sigma S_\theta\frac{\partial\Phi}{\partial s}dW_\theta+\nu\frac{\partial\Phi}{\partial x}dZ_\theta\Big]\notag\\
&+\sum_{\theta}e^{-r(\theta-t)}\left\{\Phi(X_{\theta-}-\Delta_\theta b,S_\theta,\theta)- \Phi(X_{\theta-},S_\theta,\theta)\right\},
\end{align}
where $b^c$ represents the continuous part of $b$ and $\Delta_{\theta}b:=b_{\theta}-b_{\theta-}$ is the jump size of $b$ at time $\theta$. From the growth condition \eqref{growth_condi} of $\Phi$, we have that 
\begin{align}
\mathbb E\Big[\int_t^T\Big(s^2\Big|\frac{\partial \Phi}{\partial s}\Big|^2+\Big|\frac{\partial \Phi}{\partial x}\Big|^2\Big)dt\Big]<\infty.\label{6}
\end{align}
Then, 
\begin{align}\label{ito2}
\Phi(x,s,t)=&\mathbb E\big[e^{-r(T-t)}\Phi(X_T,S_T,T)+\int_t^Te^{-r(\theta-t)}\frac{m}{2}a_\theta^2dt+\int_t^Te^{-r(\theta-t)}S_\theta db_\theta\notag\\
&+\int_t^Te^{-r(\theta-t)}\mathcal M[\Phi](X_\theta,S_\theta,\theta) d\theta-\int_t^Te^{-r(\theta-t)}\frac{m}{2}\Big(a_\theta-\frac{1}{m}\frac{\partial\Phi}{\partial x}\Big)^2d\theta\notag\\
&-\int_t^Te^{-r(\theta-t)}\Big(S_\theta-\frac{\partial\Phi}{\partial x}\Big)db^c_\theta\notag\\
&-\sum_{\theta} e^{-r(\theta-t)}\left\{S_\theta \Delta_\theta b+\Phi(X_{\theta-}-\Delta_\theta b,S_\theta,\theta)- \Phi(X_{\theta-},S_\theta,\theta)\right\}\big].
\end{align}
Since $\frac{\partial\Phi}{\partial x} \le s$, we have that 
$$
S_\theta \Delta_\theta b+\Phi(X_{\theta-}-\Delta_\theta b,S_\theta,\theta)- \Phi(X_{\theta-},S_\theta,\theta) \ge 0.
$$
Recalling that $\Phi$ is the solution of \eqref{888}, we get 
$$
\Phi(x,s,t) \le \mathbb E\left[C(x,s,t;a,b)\right]
$$
for any $a\in A^1$ and $b \in A^2$, which implies that $\Phi(x,s,t)\le p(x,s,t)$. Finally, if one can find $(a^*,b^*)$ satisfying the condition in the theorem, then the last four terms in \eqref{ito2} will be identical to $0$. Hence, $\Phi(x,s,t)=\mathbb E\left[C(x,s,t;a^*,b^*)\right]\geq p(x,s,t)$.  
\end{proof}

\section{Existence of the Solution}\label{sect_exist}
In this section, we will prove the existence of a solution for (\ref{888}). First, let us make some transformation. Define $u(x,y,t)=\frac{\Phi(x,e^y,T-t)}{e^y}$. We see that $u$ is a solution of
\begin{align}\label{9}
\begin{cases}
\max\{\frac{\partial u}{\partial x}-1,\frac{\partial u}{\partial t}-\frac{\sigma^2}{2}\frac{\partial^2u}{\partial y^2}-(\mu+\frac12\sigma^2)\frac{\partial u}{\partial y}+(r-\mu)u-\frac{\nu^2}{2}\frac{\partial^2u}{\partial x^2}+\frac{e^y}{2m}(\frac{\partial u}{\partial x})^2\}=0,\\
u(x,y,0)=x^+.
\end{cases}
\end{align}
For notational simplicity, we introduce the following operators:
\[\mathcal{L}[\psi]=\frac{\partial}{\partial t}\psi-\frac{\sigma^2}{2}\frac{\partial^2}{\partial y^2}\psi-\frac{\nu^2}{2}\frac{\partial^2}{\partial x^2}\psi-(\mu+\frac12\sigma^2)\frac{\partial}{\partial y}\psi-(\mu-r)\psi,\]
\[\mathcal{A}[\psi]=\mathcal{L}\psi+\frac{e^y}{2m}\Big(\frac{\partial\psi}{\partial x}\Big)^2,\]
and
\[\mathcal{B}[\psi]=\mathcal{L}\psi+\frac{e^y}{m}\psi\frac{\partial\psi}{\partial x},\]
and denote
\[\Omega=\mathbb R^2,\quad Q=\Omega\times(0,T].\]
Noting that $u$ satisfies a variational inequality with gradient constraint. To obtain its existence, people usually consider the equation satisfied by its derivative. Thus, setting $v=\frac{\partial u}{\partial x}$, it should solve the following obstacle problem
\begin{align}
\begin{cases}
\max\{v-1,\mathcal{B}[v]\}=0\quad \mbox{in} \quad Q,\\
v(x,y,0)=\mathbf{1}_{\{x>0\}}.\label{10}
\end{cases}
\end{align}
However, due to the discontinuity of the initial value, $v$ will not be continuous at the point $x=0,t=0$. To characterize its behaviour at the initial time,  We give the following modified definition of the solution.
\begin{definition}
We say a function $v\in L^\infty(Q)\cap(\cap_{p>1}W_{p,loc}^{2,1}(Q))$ is a solution of 
\eqref{10}, if
\begin{enumerate}
    \item $\max\{v-1,\mathcal{B}[v]\}=0$, a.e. in $\Omega\times(0,T]$
    \item For any $K>0$,
\begin{equation}\label{intial_cond}
  \lim_{t\rightarrow 0} \sup_{|y|\le K,x\in \mathbb R}|v(x,y,t)-\varphi(\frac{x}{\nu \sqrt{t}})|=0,
 \end{equation}
where $\varphi(x)=\int_{-\infty}^x \frac{1}{\sqrt{2\pi}}e^{-\frac{u^2}{2}}du$ is the cumulative distribution function of standard Gaussian distribution.
\end{enumerate}
\end{definition}
\begin{remark}
\begin{enumerate}
\item Dai et al \cite{dai2022nonconcave} have defined same initial condition for viscosity solutions of HJB equations with discountinuous initial condition, including $\bf 1_{\{x>0\}}$.
    \item From \eqref{intial_cond}, we see that $v$ is continuous at the points $(x,y,0)$ with $x \neq 0$. It is also continuous at time $t=0$ in the weak sense, i,e. for any $\phi \in C_c^\infty(\Omega)$,
    $$
    \lim_{t\rightarrow0} \int_{\Omega}v(x,y,t)\phi(x,y)dxdy=\int_{\Omega}\mathbf{1}_{\{x>0\}} \phi(x,y) dxdy.
    $$
    If we further know the uniform  asymptotic behaviour of $v$ at $x=-\infty$, one can also have that
    $$
    \lim_{t\rightarrow 0}\int_{-\infty}^x v(z,y,t)dz=
    \int_{-\infty}^x{\bf 1}_{\{z>0\}}dz=x^+.
    $$
\end{enumerate}
\end{remark}

For the coefficients of the equations, we assume that $\sigma,\nu,\mu,r$ and $m$ are all positive constants. Moreover, we assume that $$\mu>r.$$
Otherwise, let $v$ be solution of $\mathcal{B}[v]=0$ subject to $v(x,y,0)=\mathbf{1}_{\{x>0\}}$. With the condition $\mu<r$, by comparison principle, we find that $v\leq 1$, which implies that it is also the solution of the variational inequality. This means that it is never optimal for the enterprise to purchase quotas before the terminal time as the discount rate is greater than the mean growth rate of the price for carbon allowance.    

%Then $u$ and $v=\frac{\partial u}{\partial x}$ are solutions of
%\[\mathcal{L}u=-\frac{s}{2m}\Big(\frac{\partial u}{\partial x}\Big)^2,\]
%\[\mathcal{L}v=-\frac{s}{m}v\frac{\partial v}{\partial x},\]
%where
%\[\mathcal{L}\Phi=\frac{\partial\Phi}{\partial t}-\frac{\sigma^2}{2}s^2\frac{\partial^2\Phi}{\partial s^2}-(\mu+\sigma^2)s\frac{\partial\Phi}{\partial s}-\frac{\nu^2}{2}\frac{\partial^2\Phi}{\partial x^2}-(\mu-r)\Phi.\]

\subsection{The Penalized Equation}
Following the idea of well-developed theory of obstacle problem (see \cite{A20} for instance), we consider a penalized
approximation of problem (\ref{10}),
\begin{align}\label{11}
\begin{cases}
\mathcal B[v^\varepsilon]+\beta(\frac{1+\varepsilon-v^\varepsilon}{\varepsilon})=0\quad \mbox{in} \quad Q,\\
v^\varepsilon(x,y,0)=v_0^\varepsilon(x)\quad \forall(x,s)\in\Omega.
\end{cases}
\end{align}
where $\varepsilon\in(0,1]$, $\{v_0^\varepsilon(\cdot)\}_{\varepsilon\in(0,1]}$ satisfies
$v_0^\varepsilon\in C^\infty(\mathbb{R})$, $0\leq v_0^\varepsilon\leq 1+\varepsilon$, $\frac{\partial v_0^\varepsilon}{\partial x}>0$ when $0<v_0^\varepsilon<1+\varepsilon$,
$\frac{\partial v_0^\varepsilon}{\partial \varepsilon}\geq 0$, $v_0^\varepsilon\leq e^{\varepsilon+x}$, $v_0^\varepsilon(x)=1+\varepsilon$ for $x\geq 0$, $v_0^\varepsilon(x)\equiv 0$ for $x\leq -\varepsilon$. The function $\beta(\cdot)$ satisfies the following conditions
$$\beta\in C^\infty(\mathbb{R}),\quad \beta''=0 \quad  \mbox{on}\quad  (-\infty,0], \quad \beta=0 \quad \mbox{on} \quad [1,\infty),$$
$$\beta'\leq 0 \quad \mbox{on} \quad \mathbb{R}, \quad\beta''\geq 0\quad  \mbox{on}\quad  \mathbb{R},\quad\beta(0)=2(\mu-r).$$

%By Schauder's Fixed Point Theorem \cite{A21,A22}, there exists a solution $v^\varepsilon(x,y,t)\in W_{p,loc}^{2,1}(\Omega\times(0,\infty))$ to approximate problem (\ref{11}).
%The procedure is standard. We refer to \cite{A23} and omit the details.
Then, we have the following results for the existence of the solution of \eqref{11}.
\begin{lemma}\label{lem_penalty}
There exists a classical smooth solution $v^\varepsilon(x,y,t)$ of \eqref{11}. Moreover, it holds that, for $(x,y,t) \in Q$, 
$$
\frac{\partial v^\varepsilon}{\partial x} >0,
$$
\end{lemma}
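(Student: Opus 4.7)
The plan is to obtain $v^\varepsilon$ by a truncation-and-compactness argument: solve the penalized quasi-linear parabolic equation on bounded cylinders $Q_N = (-N,N)^2 \times (0,T]$, derive uniform interior estimates, and pass to the limit $N\to\infty$. On each $Q_N$, with boundary data compatible with $v_0^\varepsilon$ (Dirichlet values $0$ at $x=-N$, $1+\varepsilon$ at $x=N$, and suitable values in $y$), the classical theory of quasi-linear parabolic equations (Ladyzhenskaya-Solonnikov-Uraltseva) provides a smooth solution $v_N^\varepsilon$ once $\|v_N^\varepsilon\|_{L^\infty}$ is under control.

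The key a priori $L^\infty$ bound exploits the structure of $\beta$: the constant $\bar v \equiv 1+\varepsilon$ is a supersolution, since $\mathcal B[\bar v] + \beta(0) = -(\mu-r)(1+\varepsilon) + 2(\mu-r) = (\mu-r)(1-\varepsilon) \geq 0$, and $\underline v \equiv 0$ is a subsolution because $\tfrac{1+\varepsilon}{\varepsilon} > 1$ makes $\beta\bigl(\tfrac{1+\varepsilon}{\varepsilon}\bigr) = 0$. The comparison principle, applied to the nonlinear term $\tfrac{e^y}{m} v v_x$ (which is locally Lipschitz on the range $[0,1+\varepsilon]$ we aim to enforce), then yields $0 \leq v_N^\varepsilon \leq 1+\varepsilon$ uniformly in $N$. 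With this bound, parabolic Schauder and $L^p$ estimates applied to the linearization give uniform interior $C^{2+\alpha,1+\alpha/2}_{\mathrm{loc}}$ bounds. A diagonal subsequence converges in $C^{2,1}_{\mathrm{loc}}$ to a classical solution $v^\varepsilon$ on $Q$; the initial condition is attained continuously because $v_0^\varepsilon$ is smooth.

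For the monotonicity $\partial_x v^\varepsilon > 0$, I would differentiate the equation in $x$. Setting $w = \partial_x v^\varepsilon$, one obtains the \emph{linear} parabolic equation
\begin{equation*}
\mathcal L[w] + \tfrac{e^y}{m}\, v^\varepsilon w_x + \Bigl[\tfrac{e^y}{m}\, w - \tfrac{1}{\varepsilon}\,\beta'\bigl(\tfrac{1+\varepsilon - v^\varepsilon}{\varepsilon}\bigr)\Bigr] w = 0.
\end{equation*}
Since $v^\varepsilon$ and its derivatives are bounded on compact sets and $\beta' \leq 0$, the coefficients are well-behaved. The initial data $w(\cdot,\cdot,0) = \partial_x v_0^\varepsilon$ is nonnegative everywhere and strictly positive on $(-\varepsilon,0)$ by the hypotheses on $v_0^\varepsilon$. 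The weak maximum principle (after the standard substitution $\tilde w = e^{-Kt}w$ to absorb the zeroth-order coefficient into a nonpositive one) yields $w\geq 0$ on $Q$, and the strong maximum principle then upgrades this to $w > 0$ for all $t > 0$, since $w(\cdot,\cdot,0)$ is not identically zero.

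The main obstacle is the interplay between the Burgers-type nonlinearity $\tfrac{e^y}{m}\,v v_x$ and the unbounded spatial domain: the nonlinearity is not globally Lipschitz, so the comparison principle on the truncated cylinders must be established in tandem with the $L^\infty$ bound, and the interior estimates must be shown independent of the truncation radius $N$. Once the global $L^\infty$ bound $v_N^\varepsilon \in [0,1+\varepsilon]$ is secured, everything downstream — including bootstrapping to $C^{2,1}$ regularity and the linearized argument for monotonicity — is essentially routine.
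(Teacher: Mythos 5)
Your existence construction (truncation to bounded cylinders, the sub/supersolution pair $0$ and $1+\varepsilon$, uniform interior estimates, diagonal limit) is a legitimate alternative to the paper's route, which instead modifies the nonlinearity to $\frac{e^y}{m}v\,(v_x)_+$ and invokes Schauder's fixed point theorem. The genuine gap is in the monotonicity step, which is the heart of the lemma. You propose to prove $w=\partial_x v^\varepsilon\ge 0$ by applying the weak maximum principle to the differentiated equation on all of $Q$, absorbing the zeroth-order term with an $e^{-Kt}$ substitution. But that coefficient is $\frac{e^y}{m}w-\frac{1}{\varepsilon}\beta'$: it contains the unknown $w$ itself multiplied by the weight $e^y$, which is unbounded in $y$, and before you know $w\ge 0$ it has neither a sign nor a global lower bound, so no constant $K$ absorbs it; moreover, on the unbounded domain you would additionally need a Phragm\'en--Lindel\"of-type growth control on $w$ that has not been established. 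Put differently, writing the equation as $P[w]=-\frac{e^y}{m}w^2\le 0$, where $P$ is linear with locally bounded coefficients, only exhibits $w$ as a subsolution, which yields bounds from above, not the nonnegativity you need; keeping $\frac{e^y}{m}w$ as a ``coefficient'' is circular. ``Bounded on compact sets'' is not enough here.

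This is precisely the difficulty the paper's proof is built to avoid: it first proves $w\ge 0$ \emph{without} differentiating, by comparing $v^\varepsilon(\cdot+x',\cdot,\cdot)$ with $v^\varepsilon$ for the modified operator $\tilde{\mathcal B}[v]=\mathcal L[v]+\frac{e^y}{m}v(v_x)_+$ (the truncation of the gradient term restores the structure needed for comparison, and the shifted initial data dominates because $v_0^\varepsilon$ is non-decreasing); it then observes that $v_x\ge0$ makes $\tilde{\mathcal B}$ coincide with $\mathcal B$, and only afterwards differentiates, at which point the coefficient $\frac{e^y}{m}w-\frac{1}{\varepsilon}\beta'$ is nonnegative and the strong maximum principle upgrades $w\ge0$ to $w>0$. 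If you want to keep your differentiated-equation strategy, you must run it on the truncated cylinders, where $\frac{e^y}{m}w_N$ is a genuinely bounded coefficient, and control the sign of $w_N=\partial_x v_N^\varepsilon$ on the lateral boundary (e.g.\ via $x$-monotone boundary data and a Hopf-type argument at $x=\pm N$), then pass to the limit; as written on $Q$ the step fails. The existence half of your plan is fine modulo the usual corner-compatibility and the verification that the interior estimates are uniform in $N$.
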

\begin{proof}
To prove the monotonicity with respect to $x$, we will use comparison principle for parabolic non-linear equation. However, due to the non-linear term $\frac{e^y}{m}v\frac{\partial v}{\partial x}$ in $\mathcal B[v]$, it does not satisfy the standard assumption for comparison principle (see \cite[Theorem 9.1]{lieberman1996second} for instance). For that purpose, let us consider a modified operator $\tilde {\mathcal B}$ as 
$$
\tilde {\mathcal B }[v]= \mathcal L[v]+\frac{e^y}{m}v(\frac{\partial v}{\partial x})_+,
$$
where $(x)_+$ stands for the positive part of $x$. By Schauder's fixed point Theorem \cite{A21,A22}, there exists a classical smooth solution $v^\varepsilon$ for
\begin{align*}
\begin{cases}
\tilde {\mathcal B}[v^\varepsilon]+\beta(\frac{1+\varepsilon-v^\varepsilon}{\varepsilon})=0\quad \mbox{in} \quad Q,\\
v^\varepsilon(x,y,0)=v_0^\varepsilon(x)\quad \forall(x,s)\in\Omega.
\end{cases}
\end{align*}
For any $x'>0$, denote $\tilde v^{\varepsilon}(x,y,t):=v^{\varepsilon}(x+x',y,t)$ and it holds that $\tilde v^\varepsilon$ also satisfies 
$$
\tilde {\mathcal B}[\tilde v^\varepsilon]+\beta(\frac{1+\varepsilon-\tilde v^{\varepsilon}}{\varepsilon})=0
$$
with initial condition $\tilde v^\varepsilon(x,y,0)=v_0^\varepsilon(x+x')$. We see that the nonlinear operator $\tilde {\mathcal B}[v]+\beta(\frac{1+\varepsilon-v}{\varepsilon})$ satisfies the assumptions for comparison principle. Since $v_0^\varepsilon$ is non-decreasing with respect to $x$, one deduce that $\tilde v^\varepsilon \ge v^\varepsilon$, which is equivalent to $\frac{\partial v^\varepsilon}{\partial x} \ge 0$. 
Moreover,  it implies that $v^{\varepsilon}$ satisfies 
$$
\mathcal B[v^\varepsilon]+\beta(\frac{1+\varepsilon-v^\varepsilon}{\varepsilon})=0.
$$
That is, there exists a classical smooth solution $v^\varepsilon(x,y,t)$ of \eqref{11}.

Letting $w=\frac{\partial v^\varepsilon}{\partial x}$, we have
\begin{align}\label{5555}
\begin{cases}
\mathcal L[w]+\frac{e^y}{m}v^\varepsilon w_x+(\frac{e^y}{m}w-\beta'(\frac{1+\varepsilon-v^\varepsilon}{\varepsilon})\frac{1}{\varepsilon})w=0 \quad\mbox{in} \quad Q,\\
w(x,y,0)=\frac{\partial v_0^\varepsilon}{\partial x}\quad \forall(x,y)\in\Omega.
\end{cases}
\end{align}
Since we have $w\ge 0$ and $\beta' \le 0$, we see that the coefficient for $w$ term is no less than $0$. Then, one can apply 
strong maximum principle shows that $w> 0$.
\end{proof}
\begin{remark}\label{rmk_compare}
Assume that $u$ and $v$ satisfy 
    $$
    \mathcal B[u]+\beta(\frac{1+\varepsilon-u}{\varepsilon}) \ge 0,
    $$
    and
    $$
    \mathcal B[v]+\beta(\frac{1+\varepsilon-v}{\varepsilon}) \le 0,
    $$
respectively, and $v \le u$ at time $t=0$. If we further know that $u$ and $v$ are both non-decreasing with respect to $x$, then, from previous proof, it holds that $v \le u$. In fact, we also give a comparison principle for related variational inequality in Section \ref{sect_uniq}. 
\end{remark}
Now, we give some prior estimates of the approximating solution $v^\varepsilon(x,s,t)$, which are independent of $\varepsilon$.

%Set $$\mathcal B^\varepsilon[v]=v_t-\frac{\sigma^2}{2}s^2v_{ss}-(\mu+\sigma^2)sv_s-(\mu-r)v-\frac{\nu^2}{2}v_{xx}+\frac{s}{m}vv_x+\beta(\frac{1+\varepsilon-v}{\varepsilon}).$$

\begin{lemma}\label{lemma5.1}
$0\leq v^\varepsilon(x,y,t)\leq 1+\varepsilon$, $\forall \varepsilon\in(0,1]$, $t\geq0$, $x,y\in\mathbb{R}$.

 $\quad\quad\quad\quad\quad 0\leq \beta(\frac{1+\varepsilon-v^\varepsilon}{\varepsilon})\leq2(\mu-r)$ $\forall \varepsilon\in(0,1]$, $t\geq0$, $x,y\in\mathbb{R}$.
\end{lemma}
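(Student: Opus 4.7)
The plan is to apply the comparison principle recorded in Remark 3.2, taking the constants $0$ and $1+\varepsilon$ as sub- and super-solutions to sandwich $v^\varepsilon$, and then to deduce the bounds on the penalty term from the monotonicity and sign properties of $\beta$. The hypotheses of Remark 3.2 are easily verified: constants are trivially non-decreasing in $x$, and $v^\varepsilon$ itself is non-decreasing in $x$ by Lemma \ref{lem_penalty}; the initial datum satisfies $0\le v_0^\varepsilon \le 1+\varepsilon$ by construction.

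For the upper bound $v^\varepsilon \le 1+\varepsilon$, I would take $u \equiv 1+\varepsilon$. All derivatives of $u$ vanish, so $\mathcal L[u] = -(\mu-r)(1+\varepsilon)$, the nonlinear term $\tfrac{e^y}{m} u u_x$ is zero, and $\beta\bigl(\tfrac{1+\varepsilon - u}{\varepsilon}\bigr) = \beta(0) = 2(\mu-r)$, so
\[
\mathcal B[u] + \beta\Bigl(\tfrac{1+\varepsilon - u}{\varepsilon}\Bigr) = -(\mu-r)(1+\varepsilon) + 2(\mu-r) = (\mu-r)(1-\varepsilon) \ge 0,
\]
precisely because $\mu > r$ and $\varepsilon \le 1$. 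Hence $u$ is a supersolution of the penalized equation, and Remark 3.2 yields $v^\varepsilon \le 1+\varepsilon$. For the lower bound $v^\varepsilon \ge 0$, I would compare with the constant $w\equiv 0$: clearly $\mathcal B[0] = 0$, while $\tfrac{1+\varepsilon-0}{\varepsilon} = 1 + \tfrac{1}{\varepsilon} \ge 2$ lies in the interval $[1,\infty)$ where $\beta$ vanishes by construction, so $\mathcal B[0] + \beta\bigl(\tfrac{1+\varepsilon}{\varepsilon}\bigr) = 0$. Thus $0$ is a subsolution and Remark 3.2 gives $v^\varepsilon \ge 0$.

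It remains to bound $\beta\bigl(\tfrac{1+\varepsilon - v^\varepsilon}{\varepsilon}\bigr)$. The hypotheses $\beta' \le 0$ on $\mathbb R$ together with $\beta \equiv 0$ on $[1,\infty)$ imply $\beta \ge 0$ everywhere on $\mathbb R$, which gives the lower bound. The upper bound $\beta \le 2(\mu-r)$ follows by combining $v^\varepsilon \le 1+\varepsilon$, which forces $\tfrac{1+\varepsilon - v^\varepsilon}{\varepsilon} \ge 0$, with the facts that $\beta$ is non-increasing and $\beta(0) = 2(\mu-r)$.

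I do not anticipate a substantive obstacle, since everything reduces to a textbook application of the comparison principle already established in the preceding arguments. The only delicate points are bookkeeping: the sign computation at $u \equiv 1+\varepsilon$ uses both standing assumptions $\mu > r$ and $\varepsilon \le 1$ in an essential way (these are exactly what ensure $(\mu-r)(1-\varepsilon) \ge 0$), and the lower bound for $v^\varepsilon$ relies on the particular choice of $\beta$ vanishing on $[1,\infty)$, which makes the constant $0$ a genuine subsolution of the penalized equation rather than merely of the original variational inequality.
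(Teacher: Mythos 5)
Your proof is correct and follows essentially the same route as the paper: comparison with the constant sub- and supersolutions $0$ and $1+\varepsilon$ via Remark \ref{rmk_compare}, the same sign computation $-(\mu-r)(1+\varepsilon)+\beta(0)=(\mu-r)(1-\varepsilon)\ge 0$, and the bounds on $\beta$ deduced from $\beta'\le 0$, $\beta=0$ on $[1,\infty)$, and $\beta(0)=2(\mu-r)$. The only difference is that you spell out the verification of the hypotheses of the comparison remark and the monotonicity argument for $\beta$ slightly more explicitly than the paper does.
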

\begin{proof}
Set $\underline{v}\equiv0$ and $\overline{v}\equiv1+\varepsilon$, we have
\[\mathcal B[\underline{v}]+\beta(\frac{1+\varepsilon-\underline v}{\varepsilon})=\beta(\frac{1+\varepsilon}{\varepsilon})\equiv0,\]
\[\mathcal B[\overline{v}]+\beta(\frac{1+\varepsilon-\overline v}{\varepsilon})=-(\mu-r)(1+\varepsilon)+\beta(0)\ge 0,\]
\[\underline{v}(x,y,0)=0\leq v^\varepsilon(x,y,0).\]
\[\overline{v}(x,y,0)=1+\varepsilon\geq v^\varepsilon(x,y,0).\]
Hence, by maximum principle (see Remark \ref{rmk_compare}), we have
\[0\leq v^\varepsilon(x,y,t)\leq 1+\varepsilon.\]
Then, by the conditions of $\beta$,
$$\quad\quad\quad\quad\quad\quad0\leq \beta(\frac{1+\varepsilon-v^\varepsilon}{\varepsilon})\leq2(\mu-r).$$
\end{proof}
\begin{lemma}\label{lem_mono}
$\frac{\partial v^\varepsilon}{\partial \varepsilon}\geq 0$ $\forall \varepsilon\in(0,1]$, $t\geq0$, $x,y\in\mathbb{R}$.
\end{lemma}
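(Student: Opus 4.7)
The plan is to establish the monotonicity $v^{\varepsilon_2}\ge v^{\varepsilon_1}$ for arbitrary $0<\varepsilon_1<\varepsilon_2\le 1$ by a direct comparison argument on the difference, from which $\partial v^\varepsilon/\partial\varepsilon\ge 0$ follows (differentiability in $\varepsilon$ being recovered \emph{a posteriori} from uniqueness and regularity of the penalized problem). Set $\psi:=v^{\varepsilon_2}-v^{\varepsilon_1}$. Subtracting the two penalized equations and using the linearization $v^{\varepsilon_2}\partial_x v^{\varepsilon_2}-v^{\varepsilon_1}\partial_x v^{\varepsilon_1}=\psi\,\partial_x v^{\varepsilon_2}+v^{\varepsilon_1}\partial_x\psi$, I obtain
\[
\mathcal L[\psi]+\frac{e^y}{m}v^{\varepsilon_1}\partial_x\psi+\frac{e^y}{m}\psi\,\partial_x v^{\varepsilon_2}=\beta(\xi_1)-\beta(\xi_2),
\]
with $\xi_i:=(1+\varepsilon_i-v^{\varepsilon_i})/\varepsilon_i$ and initial data $\psi(\cdot,0)=v_0^{\varepsilon_2}-v_0^{\varepsilon_1}\ge 0$ by hypothesis on $v_0^\varepsilon$.

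The crucial step is signing the source $\beta(\xi_1)-\beta(\xi_2)$. I would introduce the auxiliary quantity $\hat\xi_2:=(1+\varepsilon_2-v^{\varepsilon_1})/\varepsilon_2$ and split
\[
\beta(\xi_1)-\beta(\xi_2)=\bigl[\beta(\xi_1)-\beta(\hat\xi_2)\bigr]+\bigl[\beta(\hat\xi_2)-\beta(\xi_2)\bigr].
\]
Since $\hat\xi_2-\xi_2=\psi/\varepsilon_2$, an integral mean-value expansion gives $\beta(\hat\xi_2)-\beta(\xi_2)=-c(x,y,t)\,\psi$ with $c:=-\frac{1}{\varepsilon_2}\int_0^1\beta'(\xi_2+s\psi/\varepsilon_2)\,ds\ge 0$, thanks to $\beta'\le 0$. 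For the first bracket I regard $g(\varepsilon):=\beta((1+\varepsilon-v^{\varepsilon_1})/\varepsilon)$ as a function of $\varepsilon$ with $v^{\varepsilon_1}$ fixed, for which $g'(\varepsilon)=\beta'(\xi)\,(v^{\varepsilon_1}-1)/\varepsilon^2$. If $v^{\varepsilon_1}\le 1$, then $\xi\ge 1$, so $\beta'(\xi)=0$ because $\beta\equiv 0$ on $[1,\infty)$ and hence $g'=0$; if $v^{\varepsilon_1}>1$, then $\beta'(\xi)\le 0$ while $v^{\varepsilon_1}-1>0$, so $g'\le 0$. In either case $g$ is non-increasing, giving $\beta(\xi_1)-\beta(\hat\xi_2)=g(\varepsilon_1)-g(\varepsilon_2)\ge 0$; denote this non-negative quantity by $f$.

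Combining, $\psi$ satisfies the linear parabolic equation
\[
\mathcal L[\psi]+\frac{e^y}{m}v^{\varepsilon_1}\partial_x\psi+\Bigl[\frac{e^y}{m}\partial_x v^{\varepsilon_2}+c\Bigr]\psi=f,
\]
with $f\ge 0$, non-negative zeroth-order coefficient (by Lemma \ref{lem_penalty} and $c\ge 0$), and non-negative initial data, so the weak maximum principle forces $\psi\ge 0$. The main obstacle I anticipate is the rigorous application of the maximum principle on the unbounded domain $Q$: the drift $\frac{e^y}{m}v^{\varepsilon_1}$ and the zeroth-order coefficient involve the unbounded weight $e^y$ and the possibly unbounded $\partial_x v^{\varepsilon_2}$. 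I would handle this by restricting to slabs $\{|y|\le K,\,|x|\le R\}$, using the uniform bound $|\psi|\le 1+\varepsilon_2$ from Lemma \ref{lemma5.1} to control $\psi$ on the lateral boundary, then passing $R,K\to\infty$, exactly as in the comparison framework invoked in Remark \ref{rmk_compare}.
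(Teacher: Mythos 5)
Your argument is correct, and it is in essence the finite-difference version of the paper's proof. The paper differentiates the penalized equation in $\varepsilon$ and applies the maximum principle to $\zeta=\partial v^\varepsilon/\partial\varepsilon$, whose equation has source term $\beta'\bigl(\tfrac{1+\varepsilon-v^\varepsilon}{\varepsilon}\bigr)\tfrac{1-v^\varepsilon}{\varepsilon^2}\ge 0$ by exactly the same case analysis ($\beta'=0$ when $v^\varepsilon\le 1$, $\beta'\le 0$ and $1-v^\varepsilon<0$ otherwise) and zeroth-order coefficient involving $-\beta'/\varepsilon\ge 0$ and $\tfrac{e^y}{m}v^\varepsilon_x\ge 0$; your quantities $f$, $c$ and $\tfrac{e^y}{m}\partial_x v^{\varepsilon_2}$ are precisely the difference-quotient counterparts of these three ingredients. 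What your route buys is that it never assumes the family $\varepsilon\mapsto v^\varepsilon$ is differentiable (a point the paper takes for granted), and it delivers directly the monotonicity $v^{\varepsilon_1}\le v^{\varepsilon_2}$, which is the form actually used later to get $x^{\varepsilon_1}\ge x^{\varepsilon_2}$; the derivative statement is then a corollary wherever the derivative exists, so the "a posteriori differentiability" remark is inessential. Two small caveats: your split of the penalty term requires $\partial_x v^{\varepsilon_2}\ge 0$ from Lemma \ref{lem_penalty} (which you invoke correctly; note Remark \ref{rmk_compare} itself does not apply since the two penalty parameters differ, so doing the linearization by hand as you did is the right move), and the final maximum-principle step on the unbounded domain is not quite finished as described — cutting off to a slab leaves boundary values of size $O(1)$ that do not disappear by themselves, so one needs a barrier dominating constants on the lateral boundary and vanishing locally as $R,K\to\infty$ (e.g.\ of the $\tfrac12(e^{y-y_1}+e^{y_1-y})$ type used in the proof of Lemma \ref{theorem6.1}). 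Since the paper's own proof of this lemma simply says "by the maximum principle" with the same unbounded coefficients, this is a shared, minor incompleteness rather than a defect specific to your argument.
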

\begin{proof}
Setting $\zeta=\frac{\partial v^\varepsilon}{\partial \varepsilon}$, we obtain $\zeta(\cdot,\cdot,0)=\frac{\partial v^\varepsilon_0}{\partial \varepsilon}\geq 0$.
When $0\leq v^\varepsilon \leq1$, we have $\beta'(\frac{1+\varepsilon-v^\varepsilon}{\varepsilon})=0$, while $1<v^\varepsilon \leq1+\varepsilon$, we have $\beta'(\frac{1+\varepsilon-v^\varepsilon}{\varepsilon})\leq0$. Thus,
\[\mathcal L[\zeta]+\frac{e^y}{m}(v^\varepsilon\zeta_x+\zeta v^\varepsilon_x)-\beta'(\frac{1+\varepsilon-v^\varepsilon}{\varepsilon})\frac{\zeta}{\varepsilon}=\beta'(\frac{1+\varepsilon-v^\varepsilon}{\varepsilon})\frac{1-v^\varepsilon}{\varepsilon^2}\geq 0.\]
Hence, by the maximum principle, $\zeta\geq 0$.
\end{proof}

%\section{Additional properties of the solution}

\begin{lemma}\label{lemma5.6}
For $\delta=\frac{\mu-r}{2m}$, $a>0$ and some $\kappa$ large enough, we have 
\[\rho(\frac{\delta xe^{-\kappa t}}{a+e^y})\leq v^\varepsilon(x,y,t)\leq e^{x+(\frac{\nu^2}{2}+\mu-r)t+\varepsilon}.\]
where
\begin{align}
\rho(z)=\begin{cases}
\frac12(z-1)-\frac{1}{2\pi} \sin(\pi(z-1)),\quad \rm if\quad z\in[1,3],\\
1,\quad \rm if\quad z\geq 3,\\
0,\quad \rm if \quad z\leq1.
\end{cases}
\end{align}
\end{lemma}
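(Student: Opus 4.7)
The plan is to sandwich $v^\varepsilon$ between two explicit barriers that I construct as super- and sub-solutions of the penalized equation \eqref{11}, and to conclude via the comparison principle of Remark \ref{rmk_compare}; both barriers are non-decreasing in $x$, as is $v^\varepsilon$ itself (Lemma \ref{lem_penalty}), so the remark applies directly.

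For the upper bound, set $\bar v(x,y,t):=e^{x+(\nu^2/2+\mu-r)t+\varepsilon}$. Since $\bar v$ is independent of $y$ and satisfies $\partial_{xx}\bar v=\bar v$ and $\partial_t\bar v=(\nu^2/2+\mu-r)\bar v$, direct substitution gives $\mathcal L[\bar v]=0$, whence $\mathcal B[\bar v]=\tfrac{e^y}{m}\bar v^2\ge 0$. Combined with $\beta\ge 0$ and the initial bound $\bar v(x,y,0)=e^{x+\varepsilon}\ge v_0^\varepsilon(x)$, this makes $\bar v$ a super-solution, and Remark \ref{rmk_compare} yields $v^\varepsilon\le\bar v$.

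For the lower bound, define $\underline v(x,y,t):=\rho(z)$ with $z:=\delta xe^{-\kappa t}/(a+e^y)$. Since $\rho\le 1$ we have $(1+\varepsilon-\underline v)/\varepsilon\ge 1$, hence $\beta((1+\varepsilon-\underline v)/\varepsilon)\equiv 0$, and sub-solutionality reduces to $\mathcal B[\underline v]\le 0$. On $\{z\le 1\}$, $\underline v\equiv 0$ and $\mathcal B[\underline v]=0$; on $\{z\ge 3\}$, $\underline v\equiv 1$ gives $\mathcal B[\underline v]=-(\mu-r)<0$. The initial ordering $\underline v(x,y,0)\le v_0^\varepsilon(x)$ is automatic: when $x\le 0$, $z_0\le 0$ so $\underline v=0$; when $x>0$, $v_0^\varepsilon(x)=1+\varepsilon\ge 1\ge \rho(z_0)$. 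The delicate region is $z\in(1,3)$. Setting $P:=\delta e^{-\kappa t}/(a+e^y)$ and $q:=e^y/(a+e^y)\in(0,1)$, a routine chain-rule calculation (using $z_y=-qz$, $z_{yy}=qz(2q-1)$) gives, after combining terms,
\begin{align*}
\mathcal B[\underline v]=-\kappa z\rho'(z)&-\tfrac12\bigl(\sigma^2q^2z^2+\nu^2P^2\bigr)\rho''(z)\\
&+qz\bigl[\mu+\sigma^2(1-q)\bigr]\rho'(z)+\tfrac{e^yP}{m}\rho(z)\rho'(z)-(\mu-r)\rho(z).
\end{align*}

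The main obstacle is making this expression non-positive on all of $(1,3)$ even though the intended driving term $-\kappa z\rho'(z)$ vanishes at both endpoints $z=1,3$. The sine profile in $\rho$ is chosen precisely so that two other ``rescue'' mechanisms take over there. Each coefficient apart from $\kappa$ is uniformly bounded on $(y,t)\in\mathbb R\times[0,T]$ (since $q,1-q\in(0,1)$ and $e^yP\le\delta$), and on any interior block $[1+\eta,3-\eta]$ one has $\rho'\ge c_1(\eta)>0$; hence the $-\kappa z\rho'$ term dominates all other terms once $\kappa$ is chosen large. Near $z=1$ the expansions $\rho'(z)=O((z-1)^2)$ and $\rho''(z)=\tfrac{\pi^2}{2}(z-1)+O((z-1)^2)>0$ make the negative contribution $-\tfrac12(\sigma^2q^2z^2+\nu^2P^2)\rho''(z)$ of order $-(z-1)$ with coefficient bounded below (because $q$ and $P$ cannot vanish simultaneously, so $\sigma^2q^2+\nu^2P^2\ge c_0>0$), which dominates the $O((z-1)^2)$ positive terms. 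Near $z=3$, $\rho'$ and $\rho''$ vanish while $\rho\to 1$, so $-(\mu-r)\rho\to -(\mu-r)$ swamps the $O(3-z)$ positive contribution coming from $-\rho''$. Fixing $\eta$ to absorb the two endpoint strips and then choosing $\kappa$ sufficiently large for the interior yields $\mathcal B[\underline v]\le 0$ throughout $(1,3)$; Remark \ref{rmk_compare} then delivers $\underline v\le v^\varepsilon$, completing the proof.
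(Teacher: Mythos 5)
Your upper barrier, the reduction to $\mathcal B[\underline v]\le 0$ via $\beta\equiv 0$ on $\{\underline v\le 1\}$, the initial-ordering checks, the interior-block/large-$\kappa$ step, and the treatment near $z=3$ all coincide with the paper's proof, and your chain-rule expression for $\mathcal B[\underline v]$ is correct. The gap is in your mechanism near $z=1$. There you discard $-\kappa z\rho'$ and ask the diffusion term $-\tfrac12(\sigma^2q^2z^2+\nu^2P^2)\rho''$, of size about $c_0(z-1)$, to dominate the $O((z-1)^2)$ positive terms, invoking a uniform bound $\sigma^2q^2+\nu^2P^2\ge c_0>0$. Such a $c_0$ exists for each fixed $\kappa$, but it is not independent of $\kappa$: as $y\to-\infty$ one has $q\to0$ and $P=\delta e^{-\kappa t}/(a+e^y)\to\delta e^{-\kappa t}/a$, so $\inf_{y\in\mathbb R,\,t\in[0,T]}(\sigma^2q^2+\nu^2P^2)$ is of order $e^{-2\kappa T}$; even the sharper comparison against the actual positive terms, which carry a factor $q$ (note $e^yP=\delta q e^{-\kappa t}$), only gives $\inf_{y,t}(\sigma^2q^2+\nu^2P^2)/q$ of order $e^{-\kappa T}$. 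Hence the admissible strip width near $z=1$ is $\eta\lesssim e^{-\kappa T}$, while your interior step requires $\kappa\gtrsim c_1(\eta)^{-1}\sim\eta^{-2}$; the stated order of choices (``fix $\eta$, then choose $\kappa$'') is therefore circular (it would force $\kappa\gtrsim e^{2\kappa T}$), and the argument as written does not close.

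The repair needs no lower bound on the $\rho''$-coefficient and is exactly the paper's grouping: split at $z=2$ rather than at a thin strip near $z=1$. On $1<z\le2$ one has $\rho''\ge 0$, so the $\rho''$-term already has the favorable sign, and the two positive terms are $O(\rho')$ with $\kappa$-free constants, since $qz[\mu+\sigma^2(1-q)]\le 3(\mu+\sigma^2)$ and $\frac{e^yP}{m}\rho\le\frac{\delta}{m}$; they are then absorbed by $-\kappa z\rho'$ once $\kappa\ge 3(\mu+\sigma^2)+\frac{\delta}{m}$ (the paper instead pairs $\frac{e^yP}{m}\rho\rho'$ with $-(\mu-r)\rho$ and uses the choice of $\delta$, which is equivalent bookkeeping). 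Your handling of $[2,3]$ — using $\rho\ge\tfrac12$, the vanishing of $\rho''$ at $z=3$ for the strip $[3-\eta,3]$ (where the coefficient of $\rho''$ is bounded above independently of $\kappa$), and large $\kappa$ on $[2,3-\eta]$ — is sound and matches the paper.
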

\begin{proof}
Denoting $\bar{v}(x,y,t)=e^{x+(\frac{\nu^2}{2}+\mu-r)t+\varepsilon}$, it is easy to see that 
\[v^\varepsilon(x,y,0)=v_0^\varepsilon(x)\leq e^{x+\varepsilon}=\bar{v}(x,y,0),\]
and 
\begin{align}
\mathcal B[\bar{v}]+\beta(\frac{1+\varepsilon-\bar v}{\varepsilon})&=(\frac{v^2}{2}+\mu-r)e^{x+(\frac{\nu^2}{2}+\mu-r)t+\varepsilon}-(\mu-r)e^{x+(\frac{\nu^2}{2}+\mu-r)t+\varepsilon}\notag\\
&\quad-\frac{\nu^2}{2}e^{x+(\frac{\nu^2}{2}+\mu-r)t+\varepsilon}+\frac{s}{m}e^{2[x+(\frac{\nu^2}{2}+\mu-r)t+\varepsilon]}+\beta(\frac{1+\varepsilon-\bar{v}}{\varepsilon})\geq 0.\notag
\end{align}
It follows by maximum principle that $v^\varepsilon\leq e^{x+(\frac{\nu^2}{2}+\mu-r)t+\varepsilon}$, hence we prove the second inequality of the lemma.

For the first inequality, consider the function
\[\underline{v}(x,y,t)=\rho\Big(\frac{\delta xe^{-\kappa t}}{a+e^y}\Big).\]
From the definition of $\rho(z)$, it holds that  
\begin{align}\label{qq}
\begin{cases}
\rho(z)=0\quad \rm for \quad z\le1,\\
\rho(z)=1\quad \rm for \quad z\ge3,\\
0<\rho'(z)<1\quad \rm for \quad 1<z<3,\\
\rho''>0 \quad \rm for \quad 1<z<2,\\
\rho''<0\quad \rm for\quad 2<z<3,\\
\rho(2)=\frac{1}{2},\rho''(3)=0.
\end{cases}
\end{align}
Thus, we have  that $\underline{v}(x,y,0)\leq v^\varepsilon(x,y,0)$.
For simplicity of notation, set $z=\frac{\delta xe^{-\kappa t}}{a+e^y}$ and one can compute that 
\[\underline{v}_t=-\kappa z\rho'(z),\]
\[\underline{v}_y=-\frac{\delta xe^{-\kappa t}e^y}{(a+e^y)^2}\rho'(z)=-\frac{e^y}{a+e^y}z\rho'(z),\]
\begin{align}
\underline{v}_{yy}&=[\frac{\delta xe^{-\kappa t}e^y}{(a+e^y)^2}]^2\rho''(z)+(e^y-a)\frac{\delta xe^{-\kappa t}}{(a+e^y)^3}e^y\rho'(z)\notag\\
&=\left(\frac{e^y}{a+e^y}\right)^2z^2\rho''(z)+\frac{e^y-a}{(a+e^y)^2}e^yz\rho'(z),\notag
\end{align}
\[\underline{v}_x=\frac{\delta e^{-\kappa t}}{a+e^y}\rho'(z),\]
\[\underline{v}_{xx}=\left(\frac{\delta e^{-\kappa t}}{a+e^y}\right)^2\rho''(z),\]
and
\[\beta(\frac{1+\varepsilon-\underline{v}}{\varepsilon})=\beta(1+\frac{1-\underline{v}}{\varepsilon})=0,\]
as $\underline v \le 1$. Hence,
\begin{align}
\mathcal B[\underline{v}]&=-\kappa z\rho'(z)-\frac{\sigma^2}{2}\left( \left(\frac{e^y}{a+e^y}\right)^2z^2\rho''(z)+\frac{e^y-a}{(a+e^y)^2}e^yz\rho'(z)\right)-\frac{\nu^2}{2}\Big(\frac{\delta e^{-\kappa t}}{a+e^y}\Big)^2\rho''(z)\notag\\
&\quad+(\mu+\frac12\sigma^2)\frac{e^y}{a+e^y}z\rho'(z)-(\mu-r)\rho(z)+\frac{e^y}{m}\frac{\delta e^{-\kappa t}}{a+e^y} \rho(z)\rho'(z)\notag\\
&= -\left( \frac{\sigma^2}{2}\left(\frac{e^y}{a+e^y}\right)^2z^2+\frac{\nu^2}{2}\Big(\frac{\delta e^{-\kappa t}}{a+e^y}\Big)^2\right)\rho''(z)\notag\\
&\quad -\left(\kappa+\frac{\sigma^2}{2}\frac{e^y-a}{(a+e^y)^2}e^y-(\mu+\frac{1}{2}\sigma^2) \frac{e^y}{a+e^y}  \right)z\rho'(z)-\left( \mu-r-\frac{e^y}{m}\frac{\delta e^{-\kappa t}}{a+e^y}\rho'(z)\right)\rho(z).
\end{align}
Since $\rho'(z)<1$ and the choice of $\delta$, we get that 
$$
\mu-r-\frac{e^y}{m}\frac{\delta e^{-\kappa t}}{a+e^y}\rho'(z) \ge \mu-r-\frac{\delta}{m} \ge  \frac{\mu-r}{2}.
$$
Similarly, if $\kappa>2(\sigma^2+\mu)$, 
$$
\kappa+\frac{\sigma^2}{2}\frac{e^y-a}{(a+e^y)^2}e^y-(\mu+\frac{1}{2}\sigma^2) \ge \kappa-\frac{\sigma^2}{2}-(\mu+\frac12 \sigma^2) \ge \frac \kappa 2 .
$$
Hence, when $z\leq2$, using $\rho''(z)\geq 0$, we obtain that 
\[\mathcal B[\underline{v}]\leq0.\]
In the case $z\geq 2$, we use $\rho\geq\frac{1}{2}$ to obtain that 
\[\mathcal{B}[\underline{v}]\leq-\frac{\kappa}{2}z\rho'(z)+(\frac{9\sigma^2}{2}+\frac{\delta^2\nu^2}{2a^2})|\rho''(z)|-\frac{\mu-r}{4}.\]
Since $\rho''(z)=0$ at $z=3$, one can find a small interval $[3-\eta,3]$ of $z$ such that $(\frac{9\sigma^2}{2}+\frac{\delta^2\nu^2}{2a^2})|\rho''(z)|-\frac{\mu-r}{4}\le 0$. For $z \in [2,3-\eta]$, $\rho'(z)$ is strictly above $0$. As $|\rho''(z)| \le 1$, one can choose $\kappa$ large enough to obtain that  
$$
-\frac{\kappa}{2}z\rho'(z)+(\frac{9\sigma^2}{2}+\frac{\delta^2\nu^2}{2a^2})|\rho''(z)| \le 0.
$$
Add the case $z\ge 3$, we get that $\mathcal B[\underline{v}] \le 0$. Then, by comparison principle, we have
\[v^\varepsilon(x,y,t)\geq \rho(\frac{\delta xe^{-\kappa t}}{a+e^y}).\]
\end{proof}

\begin{lemma}\label{lemma5.7}
For each $y \in \mathbb R$, $t>0$, $\varepsilon\in(0,1]$,
\[\lim\limits_{x\rightarrow+\infty} v^\varepsilon(x,y,t)\geq1,\quad \lim\limits_{x\rightarrow-\infty} v^\varepsilon(x,y,t)=0.\]
\end{lemma}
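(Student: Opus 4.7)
The plan is to read both limits off directly from the two-sided bound established in Lemma \ref{lemma5.6}, which already encodes precisely the asymptotic behavior claimed here. No new PDE argument is required; it is just a matter of sending $x$ to $\pm\infty$ in the upper and lower envelopes and using monotonicity/continuity of the cutoff $\rho$ and the exponential.

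For the limit as $x\to-\infty$, I would invoke the upper bound
\[
0\le v^\varepsilon(x,y,t)\le e^{x+(\frac{\nu^2}{2}+\mu-r)t+\varepsilon}
\]
from Lemma \ref{lemma5.6}, together with the trivial lower bound $v^\varepsilon\ge 0$ from Lemma \ref{lemma5.1}. For fixed $y\in\mathbb R$, $t>0$, $\varepsilon\in(0,1]$, the right-hand side above is a fixed multiple of $e^x$ and hence tends to $0$ as $x\to-\infty$, so $\lim_{x\to-\infty}v^\varepsilon(x,y,t)=0$ by the squeeze theorem.

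For the limit as $x\to+\infty$, I would use the other half of Lemma \ref{lemma5.6}: for any $a>0$ and $\kappa$ chosen as in the lemma,
\[
v^\varepsilon(x,y,t)\ge \rho\!\left(\tfrac{\delta xe^{-\kappa t}}{a+e^y}\right).
\]
For fixed $y,t,a$, the argument $\frac{\delta xe^{-\kappa t}}{a+e^y}$ is a strictly positive multiple of $x$ and therefore tends to $+\infty$ as $x\to+\infty$. From the explicit definition of $\rho$ in \eqref{qq}, $\rho(z)\equiv 1$ for $z\ge 3$, so the lower bound equals $1$ for all $x$ large enough. Consequently $\liminf_{x\to+\infty}v^\varepsilon(x,y,t)\ge 1$, which is exactly the inequality asserted.

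There is essentially no obstacle: the real work was packaged into the construction of the sub- and supersolutions in Lemma \ref{lemma5.6}, and this lemma is the corollary that extracts the boundary behavior at $x=\pm\infty$. The only minor point to keep in mind is that $a$ and $\kappa$ are fixed in advance (as in Lemma \ref{lemma5.6}), so the estimate is uniform in $\varepsilon\in(0,1]$ even though we state the limit pointwise; this uniformity will be useful later when passing to the limit $\varepsilon\to 0$.
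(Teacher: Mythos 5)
Your proposal is correct and is exactly the paper's argument: the paper's proof consists of the single line ``This follows from Lemmas \ref{lemma5.1} and \ref{lemma5.6},'' and your write-up simply spells out those two applications (the exponential upper bound for $x\to-\infty$, the $\rho$ sub-solution lower bound for $x\to+\infty$).
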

\begin{proof}
This follows from Lemmas \ref{lemma5.1} and \ref{lemma5.6}.
\end{proof}
\subsection{The Limit of $v^\varepsilon$}
Based on the  above prior estimates of the approximate solution $v^\varepsilon$,  we will next prove that the limit while $\varepsilon\rightarrow0$ is a solution of the obstacle problem (\ref{10}). In Lemma \ref{lemma5.1}, we prove that  $v^\varepsilon$ and $\beta(\frac{1+\varepsilon-v^\varepsilon}{\varepsilon})$ are uniformly bounded. From interior estimates for linear parabolic equation, one can get that, for any $p>1$ and  $K>0$, 
\begin{align}
||v^\varepsilon||_{W_p^{2,1}([-K,K]\times[-K,K]\times[\frac{1}{K},K))}\leq C(K,p).\label{*}
\end{align}
This implies that, taking sub-sequence if necessary, $v^\varepsilon$ converges locally uniformly to some function $v$, and the derivatives of $v^\varepsilon$ weakly converges to those of $v$. This implies that \[||v||_{W_p^{2,1}([-K,K]\times[\frac{1}{K},K]\times[\frac{1}{K},K])}\leq C(K,p).\]. 

From Lemma \ref{lemma5.7} and the fact that $\frac{\partial v^\varepsilon}{\partial x}>0$, there exists  $x^\varepsilon(\cdot,\cdot)$ such that for every $y\in \mathbb R$, $t\geq 0$,
\[v^\varepsilon(x,y,t)<1-\varepsilon \quad \rm if \quad x<x^\varepsilon(y,t),\]
\[1-\varepsilon<v^\varepsilon(x,y,t)<1+\varepsilon\quad \mbox{if} \quad x>x^\varepsilon(y,t),\]
\[v^\varepsilon(x^\varepsilon(y,t),y,t)=1-\varepsilon,\]
Moreover,
$x^\varepsilon\in C^\infty(\mathbb R\times[0,\infty))$ by the implicit function theorem.
Noting that $\frac{\partial v^\varepsilon}{\partial \varepsilon}\geq 0$ in Lemma \ref{lem_mono}, we deduce that for $0<\varepsilon_1<\varepsilon_2$, we have
\[x^{\varepsilon_1}(y,t)\ge x^{\varepsilon_2}(y,t)\quad \forall y\in \mathbb R,t>0.\]
Moreover, from Lemma \ref{lemma5.6}, we have that 
$$
-(\frac{\nu^2}{2}+\mu-r)t-\varepsilon+\log(1-\varepsilon)
\le x^\varepsilon(y,t) \le \frac{3(a+e^y)}{\delta}e^{\kappa t}.
$$
This implies that, for fixed $(y,t)$, $x^\varepsilon(y,t)$ admits a limit when $\varepsilon$ goes to $0$. Define
$$
x(y,t):=\lim\limits_{\varepsilon\rightarrow 0}x^\varepsilon(y,t) \quad \forall y\in\mathbb{R},t\geq 0.
$$
We have the following results for the existence of the solution,
\begin{theorem}\label{thm_exist}
$v \in W_{p,loc}^{2.1}(Q)$ is a solution of (\ref{10}) and $x(y,t)$ is the free boundary. 
\end{theorem}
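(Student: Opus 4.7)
The plan is to pass to the limit $\varepsilon\to 0^+$ in the penalized equation (\ref{11}) and verify each clause of the solution definition together with the free-boundary characterization.

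\emph{Compactness and PDE.} The uniform bounds of Lemma \ref{lemma5.1} and the interior estimate (\ref{*}) make $\{v^\varepsilon\}$ locally equicontinuous on $Q$ and bounded in $W_{p,\mathrm{loc}}^{2,1}(Q)$ for every $p\in(1,\infty)$, so after extracting a subsequence $v^\varepsilon\to v$ locally uniformly on $Q$ and weakly in $W_{p,\mathrm{loc}}^{2,1}$. Consequently $v\in W_{p,\mathrm{loc}}^{2,1}(Q)\cap L^\infty(Q)$ is non-decreasing in $x$ and satisfies $0\le v\le 1$. Since $\mathcal B[v^\varepsilon]=-\beta\bigl((1+\varepsilon-v^\varepsilon)/\varepsilon\bigr)\le 0$, weak convergence of the linear part of $\mathcal B$ combined with strong $L^\infty_{\mathrm{loc}}$ convergence (which produces $v^\varepsilon\partial_x v^\varepsilon\rightharpoonup v\partial_x v$ weakly in $L^p_{\mathrm{loc}}$) yields $\mathcal B[v]\le 0$ a.e. For the reverse direction, fix any open set $U\Subset\{v<1\}$; uniform convergence forces $v^\varepsilon<1$ on $U$ for all small $\varepsilon$, so the argument of $\beta$ exceeds $1$ and $\beta=0$, giving $\mathcal B[v^\varepsilon]=0$ on $U$. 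Passing to the limit yields $\mathcal B[v]=0$ a.e.~on $\{v<1\}$, which combined with $v\le 1$ gives $\max\{v-1,\mathcal B[v]\}=0$ a.e.

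\emph{The free boundary.} Monotonicity of $v^\varepsilon$ in $\varepsilon$ (Lemma \ref{lem_mono}) makes $x^\varepsilon(y,t)$ monotone in $\varepsilon$ with the pointwise limit $x(y,t)$ already constructed. If $x>x(y,t)$ then $x>x^\varepsilon(y,t)$ for every $\varepsilon$, so $v^\varepsilon(x,y,t)>1-\varepsilon$ and taking $\varepsilon\to 0$ yields $v(x,y,t)\ge 1$, hence $v(x,y,t)=1$. Conversely, for $x<x(y,t)$ pick $\varepsilon_0$ small enough that $x<x^{\varepsilon_0}(y,t)$; then $v^{\varepsilon_0}(x,y,t)<1-\varepsilon_0$, and since $v^\varepsilon$ is non-decreasing in $\varepsilon$ we have $v=\lim_{\varepsilon\to 0^+}v^\varepsilon\le v^{\varepsilon_0}$, so $v(x,y,t)<1-\varepsilon_0<1$. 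Thus $\{v<1\}=\{x<x(y,t)\}$, identifying $x(y,t)$ as the free boundary.

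\emph{The main obstacle: the initial condition.} Verifying (\ref{intial_cond}) is the crux, because the interior estimate (\ref{*}) degenerates as $t\to 0$ and the required convergence must be uniform in $x\in\mathbb R$ for $|y|\le K$. My plan is to compare $v^\varepsilon$ with the Gaussian profile $\varphi(x/(\nu\sqrt t))$, which solves the pure one-dimensional heat equation $\partial_t w-\frac{\nu^2}{2}\partial_{xx}w=0$ with step initial data $\mathbf 1_{\{x>0\}}$. On the strip $|y|\le K$, $0<t\le\tau$, the terms in $\mathcal B$ that are absent from this heat equation—namely $-(\mu-r)v$, the $y$-differential operator, and the nonlinearity $\frac{e^y}{m}v\partial_x v$—are uniformly bounded (using $0\le v^\varepsilon\le 1+\varepsilon$ plus gradient bounds for the heat kernel). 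Multiplying the Gaussian by factors $e^{\pm Ct}$ and adding small $O(t)$ corrections absorbs these perturbations and produces sub- and super-solutions of the penalized equation whose initial data sandwich $v_0^\varepsilon$. The comparison principle of Remark \ref{rmk_compare} then implies $|v^\varepsilon(x,y,t)-\varphi(x/(\nu\sqrt t))|\to 0$ as $(\varepsilon,t)\to 0$ uniformly in $x\in\mathbb R$ and $|y|\le K$, whence (\ref{intial_cond}) upon sending $\varepsilon\to 0$. The delicate point is handling the quadratic nonlinearity in the transition zone $x\sim 0$, where $\partial_x\varphi(x/(\nu\sqrt t))$ blows up as $t\to 0$, so the correction terms in the barriers have to be chosen carefully to dominate this singularity uniformly in $y$.
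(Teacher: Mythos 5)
Your first two steps (passing to the limit in the penalized equation on $\{v<1\}$ versus $\{v=1\}$, and identifying $\{v<1\}=\{x<x(y,t)\}$ via the monotonicity of $v^\varepsilon$ in $\varepsilon$ and of $x^\varepsilon$) are correct and essentially the paper's own Step 1, just organized around the set $\{v<1\}$ instead of the boundaries $x^\varepsilon(y,t)$.

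The gap is in the initial-condition step, and it sits exactly where you flag it. Your barrier plan claims the terms absent from the heat equation are ``uniformly bounded,'' but the nonlinear term evaluated on the Gaussian profile is
$\frac{e^y}{m}\,\varphi\bigl(\tfrac{x}{\nu\sqrt t}\bigr)\,\tfrac{1}{\nu\sqrt t}\varphi'\bigl(\tfrac{x}{\nu\sqrt t}\bigr)$, which is of size $t^{-1/2}$ in the transition zone $x\sim 0$. For the \emph{super}solution this term has a favorable sign (it only helps $\mathcal B[w]\ge 0$), so the upper bound is easy; but for the \emph{sub}solution it is adverse, and multiplicative factors $e^{\pm Ct}$ together with additive $O(t)$ corrections only create $O(1)$ slack in $\mathcal B$, which cannot dominate an $O(t^{-1/2})$ term as $t\to 0$. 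The unshifted Gaussian is simply not a subsolution near the front for small $t$; the only way to absorb the drift is to move the centre of the profile, i.e.\ to use something like $\varphi\bigl(\frac{x-\frac{e^{y}}{m}(1+\iota)t}{\nu\sqrt t}\bigr)$, and this is precisely what the paper does implicitly: it compares $v^\varepsilon$ with solutions $\underline\varphi^\varepsilon,\overline\varphi^\varepsilon$ of auxiliary \emph{linear} equations with the same initial data $v_0^\varepsilon$ (so the initial ordering is trivial and no direct supersolution/subsolution verification of an explicit profile is needed), and then estimates $\underline\varphi^\varepsilon$ by Feynman--Kac plus a stopping time that localizes the unbounded-in-$y$ drift $\frac{e^y}{m}(1+\iota)$. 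Your plan also leaves two further points unaddressed: the comparison of Remark \ref{rmk_compare} must hold for all $y\in\mathbb R$ (the drift coefficient $e^y/m$ is unbounded, and a $y$-dependent shift in the profile creates $\partial_y,\partial_{yy}$ terms of size $e^y\sqrt t$ and $e^{2y}t$ that must be controlled globally, not just for $|y|\le K$); and the ordering at $t=0$ for the upper barrier fails as written, since $v_0^\varepsilon=1+\varepsilon$ on $x\ge 0$ while $\varphi(x/(\nu\sqrt t))<1$, so the raw Gaussian must be shifted and enlarged (e.g.\ by $\varepsilon$) and one must still justify comparison against a barrier whose trace at $t=0$ is discontinuous. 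In short, the conclusion is the paper's, but the proposed route to \eqref{intial_cond} is not carried out, and its central boundedness claim is false precisely where the work is needed.
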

\begin{proof}{\bf Setp 1.}
First, let us prove that $v$ satisfies the equation. Fix $(x_0,y_0,t_0)\in Q$.
 Consider two cases:

(1) $x_0<x(y_0,t_0)$;

(2) $x_0>x(y_0,t_0)$.

{\it Case 1: $x_0<x(y_0,t_0)$.} There exists $\varepsilon_0\in (0,1]$ such that
\[x^{\varepsilon_0}(y_0,t_0)>x_0.\]
By continuity of $x^{\varepsilon_0}$, there exists $\delta>0$, such that
\[x^{\varepsilon_0}(y,t)>x_0+\delta \quad \forall y\in (y_0-\delta,y_0+\delta), t\in(t_0-\delta,t_0+\delta).\]
By monotonicity, for $\varepsilon\in(0,\varepsilon_0)$, we have
\[x^{\varepsilon}(y,t)>x^{\varepsilon_0}(y,t)>x_0+\delta\quad \forall y\in (y_0-\delta,y_0+\delta), t\in(t_0-\delta,t_0+\delta).\]
Noting that $v^\varepsilon<1-\varepsilon$ when $x<x^\varepsilon(y,t)$, we see that
\[\beta(\frac{1+\varepsilon-v^\varepsilon}{\varepsilon})=0\quad \mbox{if} \quad x<x^\varepsilon(y,t).\]
Thus, for $\varepsilon\in(0,\varepsilon_0)$
\[\mathcal B[v^\varepsilon]=0 \quad \mbox{in} \quad (-\infty,x_0+\delta)\times(y_0-\delta,y_0+\delta)\times(t_0-\delta,t_0+\delta).\]
Sending $\varepsilon\rightarrow 0$, we obtain that 
\[\mathcal B[v]=0\quad \mbox{in} \quad (-\infty,x_0+\delta)\times(y_0-\delta,y_0+\delta)\times(t_0-\delta,t_0+\delta).\]
In addition,
\[v<v^\varepsilon<1\quad \mbox{in} \quad (-\infty,x_0+\delta)\times(y_0-\delta,y_0+\delta)\times(t_0-\delta,t_0+\delta).\]

{\it Case 2: $x_0>x(y_0,t_0)$.} Since $x^\varepsilon$ is increasing with respect to $\varepsilon$,
this implies that
\[x^\varepsilon(y_0,t_0)\le x(y_0,t_0)<x_0-\delta\quad\forall\varepsilon\in(0,1].\]
Thus,
\[1-\varepsilon<v^\varepsilon(x,y_0,t_0)<1+\varepsilon \quad\forall x\geq x_0.\]
Consequently,
\[v(x,y_0,t_0)=1 \quad\forall x\ge x_0.\]
We also have that 
\[\mathcal B[v^\varepsilon]=-\beta(\frac{1+\varepsilon-v^\varepsilon}{\varepsilon})\leq 0.\]
Sending $\varepsilon\rightarrow 0$, we have
\[B[v]\leq 0\quad \mbox{in}\quad L_{loc}^p(Q).\]
Thus, combining both cases, we obtain
\[\max\{B[v],v-1\}=0\quad \mbox{in} \quad L_{loc}^p(Q).\]

{\bf Step 2.} We prove that $v$ satisfies the initial condition \eqref{intial_cond}. Fixing $\iota>0$, let ${\underline \varphi}^\varepsilon$ and $\overline{\varphi}^\varepsilon$ respectively satisfies the following PDEs
$$
\mathcal L[\underline \varphi^\varepsilon]+\frac{e^y}{m}(1+\iota)\underline \varphi_x^\varepsilon=0,\quad \underline \varphi^\varepsilon(x,y,0)=v_0^\varepsilon,
$$
and
$$
\mathcal L[\overline \varphi^\varepsilon]=0,\quad\overline \varphi^\varepsilon(x,y,0)=v_0^\varepsilon.
$$
Similarly as before, one can show that $\underline{\varphi}^\varepsilon_x$, $\overline{\varphi}^\varepsilon_x \ge 0$. Denote by $w=v^\varepsilon-\underline{\varphi}^\varepsilon+\beta(0)t$. It holds that 
$$
\mathcal L[w]+\frac{e^y}{m}v^\varepsilon w_x= \frac{e^y}{m}(1+\iota-v^\varepsilon)\underline{\varphi}^\varepsilon_x+\beta(0)-\beta(\frac{1+\varepsilon-v^\varepsilon}{\varepsilon}).
$$
Since we have proved that $v^\varepsilon\le 1+\varepsilon$, we see that, for $\varepsilon<\iota$, 
$$
\mathcal L[w]+\frac{e^y}{m}v^\varepsilon w_x \ge 0.
$$
From maximum principle, we get that 
$v^\varepsilon \ge \underline \varphi^\varepsilon-\beta(0)t$. With a similar argument, we will have that 
\begin{equation}\label{ineq_varphi}
    \underline \varphi^\varepsilon -\beta(0)t \le v^\varepsilon \le \overline{\varphi}^\varepsilon.
\end{equation}

Now, let us give a probabilistic representation for $\underline \varphi^\varepsilon$. Let $Y_s=y+(\mu+\frac12\sigma^2)s+\sigma W^1_s$ and $X_s$ satisfies
$$
dX_s=-\frac{e^{Y_t}}{m}(1+\iota) ds+\nu dW^2_s, \quad X_0=x,
$$
where $W^1$ and $W^2$ are two independent scalar-valued Brownian motion. From Feymann-Kac representation, it holds that 
$$
\underline {\varphi}^\varepsilon(x,y,t)=e^{(\mu-r)t}\mathbb E\left[v_0^{\varepsilon}(X_t)\right]. 
$$
Letting $\varepsilon \rightarrow 0$ and using dominate convergence theorem, we get that 
$$
\lim_{\varepsilon\rightarrow 0} \underline \varphi^\varepsilon(x,y,t)=e^{(\mu-r)t}\mathbb E\left[{\bf 1}_{\{X_t>0\}}\right].
$$
Define a stopping time $\tau$ as 
$$
\tau=\inf\{s\ge 0|(\mu+\frac12\sigma^2)s+\sigma W_s^1\ge 1\}.
$$
It is easy to get that $P(\tau < t)\rightarrow 0$ as $t$ approaches $0$. On the set $\{\tau \ge t\}$, 
$$
X_t\ge \underline X_t:=x-\frac{e^{y+1}}{m}(1+\iota)t+\nu W^2_t,
$$
which implies that 
\begin{equation*}
\begin{split}
&\mathbb E\left[{\bf 1}_{\{X_t>0\}}\right]\ge \mathbb E\left[{\bf 1}_{\{X_t>0\}}{\bf 1}_{\{\tau \ge t\}}\right] \ge \mathbb E\left[{\bf 1}_{\{\underline X_t>0\}}{\bf 1}_{\{\tau \ge t\}}\right] \\=& \mathbb E\left[{\bf 1}_{\{\underline X_t>0\}}\right]-\mathbb E\left[{\bf 1}_{\{\underline X_t>0\}}{\bf 1}_{\{\tau < t\}}\right] \ge \mathbb E\left[{\bf 1}_{\{\underline X_t>0\}}\right]-P(\tau<t).
\end{split}
\end{equation*}
One can compute that 
$$
\mathbb E\left[H(\underline X_t)\right]=P(\underline X_t \ge 0)=1-\varphi(\frac{\frac{e^{y+1}}{m}(1+\iota)t-x}{\nu \sqrt{t}})=\varphi(\frac{x-\frac{e^{y+1}}{m}(1+\iota)t}{\nu \sqrt{t}}).
$$
Following a similar argument,
$$
\lim_{\varepsilon\rightarrow 0}\overline \varphi^\epsilon(x,y,t)=\mathbb E\left[{\bf 1}_{\{\overline X_t>0\}}\right],
$$
where $\overline X_t=x+\nu W_t^2$. An elementary computation gives us that 
$$
\mathbb E\left[{\bf 1}_{\{\overline X_t>0\}}\right]=\varphi(\frac{x}{\nu \sqrt{t}}).
$$
Thus, letting $\varepsilon \rightarrow 0$ in \eqref{ineq_varphi} , we have that 
$$
e^{(\mu-r)t}\left\{\varphi(\frac{x-\frac{e^{y+1}}{m}(1+\iota)t}{\nu \sqrt{t}})-\beta(0)t-P(\tau<t)\right\} \le v(x,y,t)\le e^{(\mu-r)t}\varphi(\frac{x}{\nu \sqrt{t}}),
$$
which implies that, for any $K>0$,
$$
\lim_{t\rightarrow 0} \sup_{|y|\le K,x\in \mathbb R}|v(x,y,t)-\varphi(\frac{x}{\nu \sqrt{t}})|=0.
$$

\end{proof}
Then, we have the existence of the solution for \eqref{888}.
\begin{theorem}
There exists $\Phi\in C(\bar{Q})\cap C^{2,1}(Q)$ be the solution of (\ref{888}), which satisfies the growth condition \eqref{growth_condi}. 
\end{theorem}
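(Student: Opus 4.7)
The plan is to undo the two transformations that carried \eqref{888} into \eqref{10}. Given the solution $v$ of \eqref{10} produced by Theorem~\ref{thm_exist}, I would first integrate in $x$ to recover a primitive $u$ and then set $\Phi(x,s,t):=s\,u(x,\log s,T-t)$. The natural choice of primitive is
$$
u(x,y,t):=\int_{-\infty}^{x} v(z,y,t)\,dz,
$$
which converges absolutely because the exponential upper bound $v^\varepsilon\le e^{x+(\nu^2/2+\mu-r)t+\varepsilon}$ from Lemma~\ref{lemma5.6} passes to the limit $\varepsilon\to 0$. By construction $u_x=v$, and the initial condition $u(x,y,0)=x^+$ follows from \eqref{intial_cond} together with dominated convergence using the same tail, which also yields continuity of $u$ up to $\bar Q$.

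The key verification is that $u$ solves \eqref{9} almost everywhere. The gradient constraint $u_x-1\le 0$ is immediate from $v\le 1$. For the parabolic part, differentiation in $x$ gives the formal identity
$$
\partial_x \mathcal A[u]=\mathcal B[v].
$$
In the continuation region $\{v<1\}$, Theorem~\ref{thm_exist} gives $\mathcal B[v]=0$, so $\mathcal A[u]$ is independent of $x$; since $u$ together with its relevant derivatives decay to $0$ as $x\to-\infty$ (by differentiating the penalised equation in $y$ and $t$, propagating the exponential bound by the maximum principle, and passing to the limit), this forces $\mathcal A[u]\equiv 0$ there. In the stopping region $\{v=1\}$, $\mathcal B[v]\le 0$ means $\mathcal A[u]$ is non-increasing in $x$, and since it vanishes on the free boundary by continuity one deduces $\mathcal A[u]\le 0$; together with $u_x=1$ this gives $\max\{u_x-1,\mathcal A[u]\}=0$. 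Interior $C^{2,1}$ regularity of $u$ then follows from parabolic Schauder estimates inside the continuation region (where $\mathcal A[u]=0$ is a uniformly parabolic quasilinear equation with bounded coefficients), from the affine form $u=x+\psi(y,t)$ inside the stopping region, and from the classical theory of parabolic obstacle problems \cite{A20} for matching across the free boundary.

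Unwinding the transformation produces $\Phi\in C(\bar Q)\cap C^{2,1}(Q)$ solving \eqref{888} with terminal datum $\Phi(x,s,T)=s\,x^+$. The main obstacle is verifying the growth condition \eqref{growth_condi}. The bound $|\partial_x\Phi|=s\,v\le s$ is trivial, but $\partial_s\Phi=u+u_y$ requires a polynomial-in-$s$ estimate: I would obtain this by differentiating the penalised equation \eqref{11} in $y$, comparing the resulting linear equation for $v^\varepsilon_y$ against explicit barriers of the form $C(e^{ny}+e^{-ny})$ via the maximum principle, transferring the bound to $u$ and $u_y$ by integration in $x$ (with care across the free boundary whose location itself must be controlled polynomially in $s=e^y$), and finally passing to the limit $\varepsilon\to 0$. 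Carrying out this chain of estimates in a way compatible with the singular initial behaviour \eqref{intial_cond} is the most technically delicate point of the proof.
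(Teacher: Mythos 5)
Your proposal follows essentially the same route as the paper: define $u(x,y,t)=\int_{-\infty}^x v(z,y,t)\,dz$, use the identity $\partial_x\mathcal A[u]=\mathcal B[v]$ together with the exponential decay of $v$ and its derivatives as $x\to-\infty$ to verify $\max\{\mathcal A[u],\,u_x-1\}=0$ and the initial condition $u(x,y,0)=x^+$, transform back to $\Phi$, and establish the growth condition \eqref{growth_condi} by bounding $u_y$ through estimates on $v_y$ combined with the polynomial-in-$e^y$ control of the free boundary location. The only minor deviation is technical: you propose maximum-principle barriers (after differentiating the penalized equation in $y$) to bound $v_y$, where the paper instead invokes interior $L^p$/parabolic estimates on $\mathcal B[v]=0$ for $x\ll 0$ and the boundedness of $v$ elsewhere — an inessential variant of the same argument.
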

\begin{proof}
Define 
$$
u(x,y,t)=\int_{-\infty}^x v(z,y,t)dz.
$$
Letting $\varepsilon \rightarrow 0$ for the inequality in Lemma \ref{lemma5.6}, we prove that 
$$
v(x,y,t)\le e^{x+(\frac{\nu^2}{2}+\mu-r)t}.
$$
This implies that $u$ is well-defined and $u_x=v$.  Fix $y_0$ and $t_0>0$. For $x<-2(\frac{\nu^2}{2}+\mu-r)t_0,y\in \mathbb R$ and $t\in[\frac{t_0}{2},2t_0]$, we see that $v$ satisfies $\mathcal B[v]=0$. Using interior $L^p$ estimate, we get that 
$$
||v^\varepsilon||_{W_p^{3,\frac{3}{2}}([-x-1,x+1]\times[-K,K]\times[\frac{t_0}{2},2t_0)]}\leq C(K,p)e^{ny} e^x,
$$
for some constant $n$. This implies that the derivatives of $v$ goes to $0$ with a rate $e^x$ when $x$ goes to $-\infty$. Thus, one can interchange the order of integration and differentialation, i.e. for example,
$$
\frac{\partial u}{\partial t}= \int_{-\infty}^x \frac{\partial v}{\partial t}(z,y,t)dz. 
$$
Then, we have
$$
\mathcal B[v]=\frac{\partial }{\partial x}\mathcal A[u],
$$
or equivalently,
$$
\mathcal A[u]=\int_{-\infty}^x \mathcal B[v](z,y,t)dz.
$$
Hence, one can verify that 
$$
\max\{\mathcal A[u],\frac{\partial u}{\partial x}-1\}=0.
$$
Moreover, from the initial condition \eqref{intial_cond} of $v$, we also have that 
$$
\lim_{t\rightarrow0}u(x,y,t)=\lim_{t\rightarrow0} \int_{-\infty}^x v(z,y,t)dz=\int_{-\infty}^x \mathbf{1}_{z>0}dz=x^+.
$$
Hence, $u$ is a solution of \eqref{9}. Define $\Phi(x,s,t)=su(x,\log s,t)$. It is obvious that $\Phi$ solves \eqref{888}. Finally, let us check that $\Phi$ satisfies the growth condition \eqref{growth_condi}. Since $\frac{\partial \Phi}{\partial x}=s\frac{\partial u}{\partial x}$, it is obvious that  $|\frac{\partial \Phi}{\partial x}|\le s$. For $\frac{\partial \Phi}{\partial s}$, one can get that
$$
\frac{\partial \Phi}{\partial s}(x,s,t)=u(x,\log s,t)+\frac{\partial u}{\partial y}(x,\log s,t).
$$
By the definition of $u$, we have that
$$
\frac{\partial u}{\partial y}=\int_{-\infty}^x \frac{\partial v}{\partial y}(z,y,t)dz.  
$$
Since $x(y,t) \le 3\frac{a+e^y}{\delta}e^{\kappa t}$, it holds that, for $x >3\frac{a+e^y}{\delta}e^{\kappa T}+1$ 
$$
\frac{\partial v}{\partial y}(x,y,t)=0.
$$
Previous argument has proved that, for $x\le -(\frac{\nu^2}{2}+\mu-r)T$, 
$$
|\frac{\partial v}{\partial y}(x,y,t)| \le C(1+e^{ny}) e^x. 
$$
In general, due to the boundedness of $v$, we have 
$$
|\frac{\partial v}{\partial y}(x,y,t)|\le C(1+e^{ny}).
$$
Thus,
\begin{equation*}
\begin{split}
|\frac{\partial u}{\partial y}(x,y,t)| &\le \int_{-\infty}^{3\frac{a+e^y}{\delta}e^{\kappa T}+1} |\frac{\partial u}{\partial y}(z,y,t)|dz\\
&\le \int_{-\infty}^{-(\frac{\nu^2}{2}+\mu-r)T} |\frac{\partial u}{\partial y}(z,y,t)|dz+\int_{-(\frac{\nu^2}{2}+\mu-r)T} ^{3\frac{a+e^y}{\delta}e^{\kappa T}+1}|\frac{\partial u}{\partial y}(z,y,t)|dz\\
&\le Ce^{ny}\int_{-\infty}^{-(\frac{\nu^2}{2}+\mu-r)T} e^x+Ce^{ny} \int_{-(\frac{\nu^2}{2}+\mu-r)T} ^{3\frac{a+e^y}{\delta}e^{\kappa T}+1}dz\\
&=Ce^{ny}e^{-(\frac{\nu^2}{2}+\mu-r)T}+Ce^{ny}\left(3\frac{a+e^y}{\delta}e^{\kappa T}+1-(\frac{\nu^2}{2}+\mu-r)T\right)\\
&\le C(1+e^{(n+1)y}) \le C(e^{-(n+1)y}+e^{(n+1)y}).
\end{split}
\end{equation*}
This gives the growth condition for $\Phi$.
\end{proof}

\begin{remark}
Differentiating $v^\varepsilon(x^\varepsilon,y,t)-1+\varepsilon=0$, we obtain
$\frac{\partial x^\varepsilon}{\partial y}=-\frac{\partial v^\varepsilon/\partial y}{\partial v^\varepsilon/\partial x^\varepsilon}>0$.
Then,
$\frac{\partial x}{\partial y}\geq0$, i.e., the free boundary increases with the price of carbon allowances. This implies that with a higher price for allowances, the enterprise is less inclined to diminish the excess emissions by purchasing additional quotas.
\end{remark}

%Define
%\[u(x,s,t)=\int_{-\infty}^xv(y,s,t)dy.\]
%Then $u_x=v$,
%\begin{align}
%Bv&=v_t-\frac{\sigma^2}{2}s^2v_{ss}-(\mu+\sigma^2)v_s-\frac{v^2}{2}v_{xx}+\frac{s}{m}vv_x\notag\\
%&=[u_t-\frac{\sigma^2}{2}s^2u_{ss}-(\mu+\sigma^2)u_s-\frac{v^2}{2}u_{xx}+\frac{s}{2m}u_x^2]_x.\notag
%\end{align}
%Hence,
%\[\mathcal{A}[u]=\int_{-\infty}^x\mathcal{B}[v](y,s,t)dy\leq 0,\]
%\[\mathcal{A}[u]=0,\quad 0<u_x\leq1\quad when \quad x<X(s,t).\]
%\[u_x=1\quad when\quad x\geq X(s,t).\]
%Note that
%\[v\in C^{2+\alpha,1+\frac{\alpha}{2}}(\mathbb{R}\times[0,\infty)\times[0,\infty)\setminus(-\varepsilon,\varepsilon)\times[0,\infty)\times[0,\varepsilon))\quad \alpha\in(0,1).\]
%\[u\in C^{1+\alpha,\frac{1+
%\alpha}{2}}(\mathbb{R}\times[0,\infty)\times[0,\infty)\setminus(-\varepsilon,\varepsilon)\times[0,\infty)\times[0,\varepsilon))\quad \alpha\in(0,1).\]
\section{Uniqueness of Variation Inequality} \label{sect_uniq}
In this section, we will prove the uniqueness of solutions  to (\ref{10}) and (\ref{9}) by establishing an appropriate comparison principle. For notational simplicity, we introduce the operator $\mathcal F$ as 
\[\mathcal F[\Phi]=\max\Big\{\mathcal{L}\Phi+\frac{e^y}{m}\Phi\frac{\partial \Phi}{\partial x},\Phi-1\Big\},\]

\begin{lemma}[Comparison Principle]\label{theorem6.1}
Assume that $v_1$ and $v_2$ in $L^\infty(Q)\cap(\cap_{p>1}W_{p,loc}^{2,1}(Q))$ satisfy 
the initial condition \eqref{intial_cond} and 
\begin{align}
F[v_1]\geq F[v_2]\quad a.e. \quad \rm in\quad Q.
\end{align}
We also assume the validity of the following technical condition
\[\max\Big\{\frac{\partial v_1}{\partial x},\frac{\partial v_2}{\partial x}\Big\}\geq 0\quad \rm in\quad Q.\]
Then $v_1\geq v_2$ \rm in $Q$.
\end{lemma}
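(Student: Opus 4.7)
My plan is to argue by contradiction using a weighted maximum principle tailored to the quadratic nonlinearity. Set $w := v_2 - v_1$ and suppose $w > 0$ somewhere in $Q$. First I localize to the continuation region of $v_1$: on the set $\{w > 0\}$ the obstacle piece cannot be active for $v_1$, since $F[v_1] = v_1 - 1$ would force $v_1 - 1 \ge F[v_2] \ge v_2 - 1$, contradicting $w > 0$. Consequently $F[v_1] = \mathcal L v_1 + \frac{e^y}{m} v_1 \partial_x v_1$ on $\{w > 0\}$ (a.e.), and this is $\ge F[v_2] \ge \mathcal L v_2 + \frac{e^y}{m} v_2 \partial_x v_2$, giving
\[
\mathcal L w + \frac{e^y}{m}\bigl(v_2 \partial_x v_2 - v_1 \partial_x v_1\bigr) \le 0 \quad \text{a.e.\ on } \{w > 0\}.
\]
I then linearize via either of the identities $v_2 \partial_x v_2 - v_1 \partial_x v_1 = v_2 \partial_x w + w \partial_x v_1 = v_1 \partial_x w + w \partial_x v_2$, choosing pointwise the decomposition in which the factor multiplying $w$ is nonnegative. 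The technical hypothesis $\max\{\partial_x v_1, \partial_x v_2\} \ge 0$ guarantees such a choice always exists, and yields a linear differential inequality $\mathcal L w + b\,\partial_x w + c\, w \le 0$ with $c \ge 0$ and $b$ measurable, locally bounded by $\frac{e^y}{m}(1 + \|v_1\|_\infty + \|v_2\|_\infty)$.

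Next I set up the barrier. Fix $\gamma > \mu - r$, a large $\lambda$ to be chosen, and let
\[
\phi(x,y,t) := e^{\lambda t}\bigl(x^4 + e^{4y} + e^{-y} + 1\bigr),\qquad W := e^{-\gamma t} w - \varepsilon \phi,\quad \varepsilon > 0.
\]
Boundedness of $v_1, v_2$ plus $\phi \to \infty$ as $|x| + |y| \to \infty$ confines any positive supremum of $W$ to a compact spatial region, while the initial condition \eqref{intial_cond} (which gives $w \to 0$ uniformly on $\{|y| \le K\}$ as $t \to 0$) combined with $\phi > 0$ rules out the supremum being approached as $t \to 0$. Hence if $\sup_Q W > 0$, it is attained at some $(x_0, y_0, t_0)$ with $t_0 > 0$. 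At this point the relations $\partial_t W \ge 0$, $\nabla W = 0$, $D^2 W \le 0$ translate to $\partial_t w \ge \gamma w + \varepsilon e^{\gamma t_0} \partial_t \phi$, $\nabla w = \varepsilon e^{\gamma t_0} \nabla \phi$, and $D^2 w \le \varepsilon e^{\gamma t_0} D^2 \phi$. Plugging these into the linearized inequality (with the decomposition chosen so that $c \ge 0$ at $(x_0, y_0, t_0)$) yields
\[
\bigl(c + \gamma - (\mu - r)\bigr) w \;\le\; -\varepsilon e^{\gamma t_0}\bigl[\mathcal L \phi + (\mu - r)\phi + b\,\partial_x \phi\bigr].
\]
The left side is strictly positive because $w > 0$, $c \ge 0$, and $\gamma > \mu - r$. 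To reach a contradiction I would choose $\lambda$ large enough that the bracket on the right is pointwise nonnegative in $Q$; a direct calculation using Young's inequality $e^y |x|^3 \le \tfrac14 e^{4y} + \tfrac34 x^4$ to dominate $|b\,\partial_x \phi|$ shows this holds with $\lambda$ depending only on $\sigma,\nu,\mu,r,m$ and the $L^\infty$ bounds of $v_i$. Sending $\varepsilon \to 0$ then gives $w \le 0$ throughout $Q$.

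The main obstacle is the barrier construction. Because the drift coefficient $b$ grows like $e^y$, polynomial or separable barriers cannot make $\mathcal L \phi + (\mu - r)\phi + b\,\partial_x \phi$ nonnegative uniformly in $(x,y)$. The coupling $x^4 + e^{4y}$ is precisely what enables a single Young inequality to absorb the cross term $e^y|\partial_x \phi|$; the matching exponents ($4$ in $x$ and $4$ in $e^y$) are forced by that inequality, and the term $e^{-y}$ is included so that $\phi \to \infty$ also as $y \to -\infty$. Once this barrier is in place, the rest of the argument is a standard application of the parabolic weighted maximum principle.
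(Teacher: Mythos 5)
Your overall strategy is sound and, at its core, uses the same key device as the paper: on the set where $v_2>v_1$ the obstacle cannot be active for $v_1$ (so $\mathcal B[v_1]\ge \mathcal B[v_2]$ there), and the quadratic term is linearized through the two identities $v_2\partial_x v_2-v_1\partial_x v_1=v_2\partial_x w+w\,\partial_x v_1=v_1\partial_x w+w\,\partial_x v_2$, with the technical hypothesis $\max\{\partial_x v_1,\partial_x v_2\}\ge 0$ used to pick the decomposition whose zeroth-order coefficient is nonnegative — this is exactly the two-case argument at $p_2$ in the paper. Where you differ is the globalization: instead of the paper's weighted ratio $(v_2-v_1)/\psi$ with a localized bump $\varphi$ centered near a near-maximizer, you work with the plain difference and a globally coercive barrier $e^{\lambda t}(x^4+e^{4y}+e^{-y}+1)$, absorbing the $e^y$-growth of the drift with Young's inequality $e^y|x|^3\le\tfrac14 e^{4y}+\tfrac34 x^4$; this is a legitimate and arguably cleaner way to confine the supremum and handle $t\to 0$ via \eqref{intial_cond}.

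The genuine gap is regularity. The lemma only assumes $v_1,v_2\in L^\infty(Q)\cap\bigcap_{p>1}W^{2,1}_{p,loc}(Q)$, so the differential inequality you derive on $\{w>0\}$ holds only almost everywhere, and $\partial_t w$ and $D^2w$ are merely $L^p_{loc}$ functions (by parabolic Sobolev embedding only $w$ and $\nabla w$ are continuous). Your argument evaluates the a.e.\ inequality, together with the second-order conditions $\partial_t W\ge 0$, $D^2W\le 0$, at the single maximum point $(x_0,y_0,t_0)$ — a step that is not justified for strong (Sobolev) solutions: the maximum point may well lie in the null set where the inequality fails, and pointwise values of $\partial_t w$, $D^2 w$ there are not even defined. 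This is precisely why the paper's proof has a second half: it first runs the classical argument assuming $C^{2,1}$ regularity near the maximizer, and then removes that assumption by mollifying ($v_i^\varepsilon=v_i*\rho_\varepsilon$) and perturbing the test function by $(x-x_2)^4+(y-y_2)^4+(t-t_2)^4$ so that the maxima of the mollified problem converge to $p_2$ and one can pass to the limit. To close your proof you would need either this mollification step adapted to your barrier, or an appeal to a maximum principle valid for $W^{2,1}_p$ supersolutions (an Aleksandrov–Bakelman–Pucci/Krylov or Bony-type result) applied on a small cylinder around $(x_0,y_0,t_0)$, where your coefficients $b,c$ are bounded and measurable. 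As written, the decisive inequality $\bigl(c+\gamma-(\mu-r)\bigr)w\le-\varepsilon e^{\gamma t_0}\bigl[\mathcal L\phi+(\mu-r)\phi+b\,\partial_x\phi\bigr]$ at the maximum point is not established. The barrier computation itself, and the choice of $\lambda$ depending on $\sigma,\nu,\mu,r,m$ and $\|v_i\|_{L^\infty}$, are correct.
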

\begin{proof}
Assume that the assertion is not true. Then there exists $x_0,y_0\in\mathbb{R}$ and $t_0\geq 0$ such that $v_1(x_0,y_0,t_0)<v_2(x_0,y_0,t_0)$. For a positive constant $\kappa$ to be defined later, we define
\[\eta:=\sup_{(x,y)\in\Omega,t\in[0,t_0]}[v_2(x,y,t)-v_1(x,y,t)]e^{-\kappa t}.\]
Let $(x_1,y_1,t_1)\in \Omega\times[0,T]$ be a point such that
\[(v_2(x_1,y_1,t_1)-v_1(x_1,y_1,t_1))e^{-\kappa t_1}\geq \frac{7}{8}\eta.\]
Let $\xi\in C^\infty(\mathbb{R})$ be a function satisfying
\[\xi=0\quad \mbox{on}\quad[-1,1],\quad \xi=1\quad \mbox{on}\quad(-\infty,-4)\cup(4,\infty),\quad |\xi'|\leq1,\quad |\xi''|\leq1.\]
Set
\[\varphi(x,y):= \frac{1}{2}(e^{y-y_1}+e^{y_1-y})+\xi(\frac{x-x_1}{e^{y_1}+1}),\]
and
\[M:=\max\{||v_1||_{L^\infty},||v_2||_{L^\infty}\}.\]
This function has the following property:

(1) $\varphi(x_1,y_1)=1=\min_{\Omega}\varphi$;

(2) Set $D=[x_1-4(1+e^{y_1}),x_1+4(1+e^{y_1})]\times[y_1-\log 4,y_1+\log 4]$. Then
\[\varphi\geq 2 \quad \mbox{for} \quad (x,y)\notin D.\]
Setting $\psi(x,y,t)=\varphi(x,y)e^{\kappa t}$, we calculate, for $i=1,2$,
\begin{align}
\hat{\mathcal{L}}_i\psi&:=\frac{\partial \psi}{\partial t}-\frac{\sigma^2}{2}\frac{\partial^2\psi}{\partial y^2}-(\frac12\sigma^2+\mu)\frac{\partial\psi}{\partial y}-\frac{\nu^2}{2}\frac{\partial^2\psi}{\partial x^2}-(\mu-r)\psi+\frac{e^yv_i}{m}\psi_x\notag\\
&=e^{\kappa t}\bigg\{\kappa \varphi -\frac{\sigma^2}{2}\left(\frac12e^{y-y_1}+\frac12e^{y_1-y}\right)-(\frac12 \sigma^2+\mu)\left(\frac12e^{y-y_1}-\frac12e^{y_1-y}\right)\notag\\
&\quad-\frac{\nu^2}{2}\left(\frac{1}{1+e^{y_1}}\right)^2\xi''(\frac{x-x_1}{1+e^{y_1}})-(\mu-r) \varphi +\frac{e^{y}}{m} v_i\frac{1}{1+e^{y_1}}\xi'(\frac{x-x_1}{1+e^{y_1}})
\bigg\}\notag\\
&=e^{\kappa t}\bigg\{ \frac12 e^{y-y_1}\left(\kappa-\sigma^2-\mu-(\mu-r)+\frac{2e^{y_1}v_i}{(1+e^{y_1}m)}\xi'(\frac{x-x_1}{1+e^{y_1}})\right)\notag\\
&\quad+\frac12 e^{y_1-y}\left(\kappa+\mu-(\mu-r) 
 \right)+\left(\kappa-(\mu-r)\right)\xi(\frac{x-x_1}{1+e^{y_1}})-\frac{\nu^2}{2}\left(\frac{1}{1+e^{y_1}}\right)^2\xi''(\frac{x-x_1}{1+e^{y_1}})
\bigg\}\notag\\
\end{align}
Assuming that
\[\kappa=\sigma^2+2\mu-r+\frac{2}{m}(||v_1||_{L^\infty}+||v_2||_{L^\infty})+\frac{\nu^2}{2}+2,\]
we get that 
\begin{align}
\hat{\mathcal{L}}_i\psi&\geq e^{\kappa t}\Big[\Big(2+\frac{\nu^2}{2}\Big)\Big(\frac12e^{y-y_1}+\frac12 e^{y_1-y}\Big)+\xi(\frac{x-x_1}{1+e^{y_1}})-\frac{\nu^2}{2}\Big]\notag\\
&\geq e^{\kappa t}\Big[\Big(\frac12e^{y-y_1}+\frac12 e^{y_1-y}\Big)+\xi(\frac{x-x_1}{e^{y_1}+1})\Big]=\psi.\notag
\end{align}
Now define
\[\hat{\eta}=\sup_{\Omega\times[0,t_0]}\frac{(v_2-v_1)e^{-\kappa t}}{\varphi}=\sup_{\Omega\times[0,t_0]}\frac{(v_2-v_1)}{\psi}.\]
Evaluating at $(x_1,y_1,t_1)$ and using $\varphi(x_1,y_1)=1$, we see that $\hat{\eta}\geq\frac{7}{8}\eta$.

When $(x,y)\notin D$, we use $\varphi(x,y)\geq 2$ to obtain that
\begin{align}
\frac{(v_2-v_1)e^{-\kappa t}}{\varphi(x,y)}\leq\frac{\eta}{2}\quad \mbox{holds for} 
\quad t\in[0,t_0],\quad (x,y)\notin D.\label{aa}
\end{align}
From the boundary condition, we have 
that $v_2-v_1$ locally uniformly convergences to $0$ when $t\rightarrow 0$. Thus, there exists $\hat{t}_0\in(0,t_0)$ such that
\begin{align}
\frac{(v_2-v_1)e^{-\kappa t}}{\varphi(x,y)}\leq\frac{\eta}{2}\quad \mbox{also  holds for} \quad t\in[0,\hat{t}_0],\quad (x,y)\in D.\label{bb}
\end{align}
Hence there exists $p_2=(x_2,y_2,t_2)\in D\times(\hat{t_0},t_0)$ such that
\[\sup_{\Omega\times[0,t_0]}\frac{v_2-v_1}{\psi}=\hat{\eta}=\frac{v_2-v_1}{\psi}|_{p_2}.\]

First, we consider the case that $v_1$ and $v_2$ are $C^{2,1}$ near $p_2$. Note that we have
\[\max_{\Omega\times[0,t_0]}\{v_2-v_1-\hat{\eta}\psi\}=0=v_2(p_2)-v_1(p_2)-\hat{\eta}\psi(p_2).\]
Thus,
\[\frac{\partial}{\partial t}(v_2-v_1-\hat{\eta}\psi)(p_2)\geq0,\]
\[\frac{\partial}{\partial x}(v_2-v_1-\hat{\eta}\psi)(p_2)=0,\quad \frac{\partial}{\partial y}(v_2-v_1-\hat{\eta}\psi)(p_2)=0,\]
\[-\frac{\partial^2}{\partial x^2}(v_2-v_1-\hat{\eta}\psi)(p_2)\geq0,\quad -\frac{\partial^2}{\partial y^2}(v_2-v_1-\hat{\eta}\psi)(p_2)\geq0,\]
and
\[(v_2(p_2)-1)-(v_1(p_2)-1)=\hat{\eta}\psi(p_2)\geq\hat{\eta}.\]
Then, 
\begin{align}
&\quad\mathcal{B}[v_2]-\mathcal{B}[v_1]|_{p_2}\notag\\
&\geq\mathcal{B}[v_1+\hat{\eta}\psi]-\mathcal{B}[v_1]|_{p_2}\notag\\
&=\mathcal{L}v_1|_{p_2}+\hat{\eta}\mathcal{L}\psi|_{p_2}+\frac{e^y}{m}[v_1(p_2)+\hat{\eta}\psi(p_2)][\frac{\partial v_1}{\partial x}(p_2)+\hat{\eta}\frac{\partial\psi}{\partial x}(p_2)]-\mathcal{L}v_1|_{p_2}-\frac{e^y}{m}v_1(p_2)\frac{\partial v_1}{\partial x}(p_2)\notag\\
&=\hat{\eta}\mathcal{L}\psi|_{p_2}+\frac{e^y}{m}v_1(p_2)\hat{\eta}\frac{\partial \psi}{\partial x}(p_2)+\frac{e^y}{m}\hat{\eta}\psi(p_2)\frac{\partial v_1}{\partial x}(p_2)+\frac{e^y}{m}\hat{\eta}^2\psi(p_2)\frac{\partial \psi}{\partial x}(p_2)\notag\\
&=\hat{\eta}\Big[\mathcal{L}\psi+\frac{e^y}{m}\Big(v_2\frac{\partial\psi}{\partial x}+\psi\frac{\partial v_1}{\partial x}\Big)\Big]\Big|_{p_2}.
\end{align}

Consider two cases: $\frac{\partial v_1}{\partial x}(p_2)\geq 0$,  and   $\frac{\partial v_2}{\partial x}(p_2)\geq 0$. In the first case, we have
\[\mathcal{B}[v_2]-\mathcal{B}[v_1]|_{p_2}\geq\hat{\eta}[\hat{\mathcal{L}}_2\psi](p_2)\geq\hat{\eta}\psi(p_2)\geq\hat{\eta}.\]
In the second case, we use
\[\Big[\frac{\partial v_1}{\partial x}\psi+v_2\frac{\partial \psi}{\partial x}\Big]\Big|_{p_2}=\Big[\frac{\partial v_2}{\partial x}\psi+v_1\frac{\partial \psi}{\partial x}\Big]\Big|_{p_2}\]
to obtain
\[\mathcal{B}[v_2](p_2)-\mathcal{B}[v_1](p_2)\geq\hat{\eta}[\hat{\mathcal{L}}_1\psi](p_2)\geq\hat{\eta}\psi(p_2)\geq\hat{\eta}.\]
In conclusion, we obtain that 
\[[F[v_2]-F[v_1]]|_{p_2}=[\max\{\mathcal{B}[v_2],v_2-1\}-\max\{\mathcal{B}[v_1],v_1-1\}]|_{p_2}\geq \hat{\eta},\]
which contradicts the assumption.

Next, we drop the assumption $v_i\in C^{2,1}$, only assuming that $v_i\in W_{p,loc}^{2,1}$ for some $p\gg1$. Let $\rho \in C^\infty(\mathbb{R}^3)$ satisfies
\[\rho\geq 0\quad \mbox{in} \quad\mathbb{R}^3,\quad \iiint_{\mathbb{R}^3}\rho=1,\quad \rho(x,y,t)=0\quad \mbox{if}\quad x^2+y^2+t^2\geq1.\]
Set $\rho_\varepsilon(x,y,z)=\frac{1}{\varepsilon^3}\rho(\frac{x}{\varepsilon},\frac{y}{\varepsilon},\frac{z}{\varepsilon})$ and define
\[v_i^\varepsilon=\int_0^\infty\!\!\!\int_{-\infty}^\infty\!\!\int_{-\infty}^\infty\!\!\!\! v_i(\tilde{x},\tilde{y},\tilde{t})\rho_\varepsilon(x-\tilde{x},y-\tilde{y},t-\tilde{t})d\tilde{x}d\tilde{y}d\tilde{t}\in C^\infty(\mathbb{R}^3).\]
Note that when $t>\varepsilon$, we have
\begin{align}
&\quad(\frac{\partial v_i^\varepsilon}{\partial x},\frac{\partial^2 v_i^\varepsilon}{\partial x^2},\frac{\partial v_i^\varepsilon}{\partial y},\frac{\partial v_i^\varepsilon}{\partial t},\frac{\partial^2 v_i^\varepsilon}{\partial y^2})\notag\\
&=\int_0^\infty\int_{-\infty}^\infty\int_{-\infty}^\infty(\frac{\partial v_i}{\partial \tilde{x}},\frac{\partial^2 v_i}{\partial \tilde{x}^2},\frac{\partial v_i}{\partial \tilde{y}}, \frac{\partial v_i}{\partial \tilde{t}},\frac{\partial^2 v_i}{\partial \tilde{y}^2})(\tilde{x},\tilde{y},\tilde{t})\rho_\varepsilon(x-\tilde{x},y-\tilde{y},t-\tilde{t})d\tilde{x}d\tilde{y}d\tilde{t}
\end{align}
Define
\[\eta^\varepsilon=\sup_{\Omega\times[0,t_0]}\frac{(v_2^\varepsilon-v_1^\varepsilon)e^{-\kappa t}}{\psi_1(x,y,t)},\]
where
$\psi_1(x,y,t):=[\varphi(x,y)+(x-x_2)^4+(y-y_2)^4+(t-t_2)^4]e^{\kappa t}$.

In view of (\ref{aa}) and (\ref{bb}), we see that, when $\varepsilon$ is sufficiently small, there exists
\[p^\varepsilon=(x^\varepsilon,y^\varepsilon,t^\varepsilon)\in D\times(\hat t_0,t_0)\]
such that the maximum is obtained at $p^\varepsilon$. In addition,
\[\lim\limits_{\varepsilon\rightarrow 0}|p^\varepsilon-(x_2,y_2,t_2)|=0,\quad \lim\limits_{\varepsilon\rightarrow 0}\eta^\varepsilon=\hat{\eta}.\]
Since $\frac{\partial v_1}{\partial x}$, $\frac{\partial v_2}{\partial x}$ are continuous, we have
\[v_2(p_2)-v_1(p_2)=\lim\limits_{\varepsilon\rightarrow 0}[v_2(p^\varepsilon)-v_1(p^\varepsilon)]=\lim\limits_{\varepsilon\rightarrow 0}\eta^\varepsilon\psi(p^\varepsilon)=\hat{\eta}\psi(p_2)\geq \hat{\eta}.\]
Thus, $v_2(p)-1>v_1(p)-1$ in a neighborhood of $p_2$, and this implies that  $\mathcal{B}[v_1]-\mathcal{B}[v_2]\geq 0$ as $F[v_1]\geq F[v_2]$. Then, when $\varepsilon$ is sufficiently small,
\begin{align}
0&\leq\int_0^\infty\int_{-\infty}^\infty\int_{-\infty}^\infty\frac{1}{\varepsilon^3}\rho(\frac{p^\varepsilon-(\tilde{x},\tilde{y},\tilde{t})}{\varepsilon})(\mathcal{B}[v_1]-\mathcal{B}[v_2])d\tilde{x}d\tilde{y}d\tilde{t}\notag\\
&=\mathcal{B}[v_1^\varepsilon](p^\varepsilon)-\mathcal{B}[v_2^\varepsilon](p^\varepsilon)+\alpha_\varepsilon.\notag
\end{align}
where $\lim\limits_{\varepsilon\rightarrow0}\alpha_\varepsilon=0$.

This implies that
\begin{align}
0&\geq\mathcal{B}[v_2^\varepsilon](p^\varepsilon)-\mathcal{B}[v_1^\varepsilon](p^\varepsilon)-\alpha_\varepsilon\notag\\
&\geq\mathcal{B}[v_1^\varepsilon+\eta^\varepsilon\psi_1](p^\varepsilon)-\mathcal{B}[v_1^\varepsilon](p^\varepsilon)-\alpha_\varepsilon\notag\\
&=\eta^\varepsilon\Big[\mathcal{L}\psi_1+\frac{e^y}{m}\Big(v_2^\varepsilon\frac{\partial\psi_1}{\partial x}+\psi_1\frac{\partial v_1^\varepsilon}{\partial x}\Big)\Big]\Big|_{p^\varepsilon}-\alpha_\varepsilon\notag\\
&=\eta^\varepsilon\Big[\mathcal{L}\psi_1+\frac{e^y}{m}v_2\frac{\partial\psi_1}{\partial x}+\frac{e^y}{m}(v_2^\varepsilon-v_2)\frac{\partial\psi_1}{\partial x}+\frac{e^y}{m}\psi_1(\frac{\partial v_1^\varepsilon}{\partial x}-\frac{\partial v_1}{\partial x})+\frac{e^y}{m}\psi_1\frac{\partial v_1}{\partial x}\Big]\Big|_{p^\varepsilon}-\alpha_\varepsilon\notag\\
&= \eta^\varepsilon\Big[\hat{\mathcal{L}}_2\psi_1+\frac{e^y}{m}(v_2^\varepsilon-v_2)\frac{\partial \psi}{\partial x}+\frac{e^y}{m}\psi_1(\frac{\partial v_1^\varepsilon}{\partial x}-\frac{\partial v_1}{\partial x})+\frac{e^y}{m}\psi_1\frac{\partial v_1}{\partial x}\Big]\Big|_{p^\varepsilon}-\alpha_\varepsilon\notag\\
&=\eta^\varepsilon\Big[\hat{\mathcal{L}}_2\psi+4(t-t_2)^3+(\kappa-\mu+r)[(x-x_2)^4+(y-y_2)^4+(t-t_2)^4]\notag\\
&\quad-6\sigma^2(y-y_2)^2-4(\frac12\sigma^2+\mu)(y-y_2)^3-6\nu^2(x-x_2)^2+\frac{4e^yv_i}{m}(x-x_2)^3\notag\\
&\quad+\frac{e^y}{m}(v_2^\varepsilon-v_2)\frac{\partial \psi}{\partial x}+\frac{e^y}{m}\psi_1(\frac{\partial v_1^\varepsilon}{\partial x}-\frac{\partial v_1}{\partial x})+\frac{e^y}{m}\psi_1\frac{\partial v_1}{\partial x}\Big]\Big|_{p^\varepsilon}-\alpha_\varepsilon\notag\\
&\geq \eta^\varepsilon\Big[1+4(t-t_2)^3+(\kappa-\mu+r)[(x-x_2)^4+(y-y_2)^4+(t-t_2)^4]\notag\\
&\quad-6\sigma^2(y-y_2)^2-4(\frac12\sigma^2+\mu)(y-y_2)^3-6\nu^2(x-x_2)^2+\frac{4e^yv_i}{m}(x-x_2)^3\notag\\
&\quad+\frac{e^y}{m}(v_2^\varepsilon-v_2)\frac{\partial \psi}{\partial x}+\frac{e^y}{m}\psi_1(\frac{\partial v_1^\varepsilon}{\partial x}-\frac{\partial v_1}{\partial x})+\frac{e^y}{m}\psi_1\frac{\partial v_1}{\partial x}\Big]\Big|_{p^\varepsilon}-\alpha_\varepsilon.\notag
\end{align}
Sending $\varepsilon\rightarrow 0$, we then obtain $\hat{\eta}\leq 0$, which is a contradiction. Thus, we show that $v_2\leq v_1$ in $\Omega \times [0,\infty)$.
\end{proof}
\begin{theorem}[Uniqueness of $v$]\label{theorem6.2}
There exists a unique $v:Q\mapsto \mathbb{R}$ satisfying
\[ v \in L^\infty(Q)\cap W_{p,loc}^{2,1}(Q)\quad(p>4),\]
\begin{align}
\max\{\mathcal{B}[v],v-1\}=0\quad a.e. \quad in\quad Q,
\end{align}
\[\frac{\partial v}{\partial x}> 0\quad in\quad Q,\]
and the initial condition \eqref{intial_cond}.
\end{theorem}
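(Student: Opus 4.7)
\textbf{Proof plan for Theorem~\ref{theorem6.2}.} The plan is to separate existence from uniqueness, treating uniqueness as the substantive part since existence has essentially been supplied by Theorem~\ref{thm_exist}.

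\emph{Existence.} I would take $v$ to be the limit function constructed in Theorem~\ref{thm_exist}. The uniform bounds from Lemma~\ref{lemma5.1} and the interior $W^{2,1}_p$ estimates pass to the limit, while the obstacle equation and the initial condition~\eqref{intial_cond} are verified there. Monotonicity $\partial_x v\ge 0$ is inherited from $\partial_x v^\varepsilon>0$ (Lemma~\ref{lem_penalty}). To lift it to a strict inequality, I would localise to the non-coincidence region $\{v<1\}$, where $v$ solves the quasilinear PDE $\mathcal B[v]=0$ classically; differentiating in $x$ yields a linear parabolic equation for $w:=\partial_x v$ with bounded coefficients and a non-negative zero-order term, analogous to~\eqref{5555} with the $\beta'$ term absent. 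The strong maximum principle then gives the dichotomy $w\equiv 0$ or $w>0$; the former is excluded because the initial profile $\varphi(x/\nu\sqrt t)$ is strictly increasing in $x$, so $w>0$ throughout $\{v<1\}$.

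\emph{Uniqueness.} This step applies Lemma~\ref{theorem6.1} twice. If $v_1,v_2$ both satisfy the four conditions of the theorem, then $\mathcal F[v_i]=0$ a.e., whence both $\mathcal F[v_1]\ge \mathcal F[v_2]$ and $\mathcal F[v_2]\ge \mathcal F[v_1]$ hold trivially. The monotonicity hypothesis $\partial_x v_i\ge 0$ supplies the technical condition $\max\{\partial_x v_1,\partial_x v_2\}\ge 0$; and the common initial condition~\eqref{intial_cond} yields $v_2-v_1\to 0$ locally uniformly as $t\to 0^+$, which is exactly the input used inside the comparison argument to choose the small $\hat t_0>0$ that keeps the supremum point away from the initial time. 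Applying Lemma~\ref{theorem6.1} in each ordering gives $v_1\ge v_2$ and $v_2\ge v_1$, hence $v_1\equiv v_2$ on $Q$.

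The delicate step is the role of the initial condition~\eqref{intial_cond}: its limit profile $\varphi(x/\nu\sqrt t)$ is discontinuous at $(0,y,0)$, so uniqueness cannot be read off by matching classical initial values. What rescues the argument is that~\eqref{intial_cond} is a \emph{uniform}-in-$x$ statement on compact $y$-slabs, so $v_2-v_1\to 0$ uniformly on $\{|y|\le K\}\times\mathbb R$ as $t\to 0^+$, which is precisely the hypothesis used inside the proof of Lemma~\ref{theorem6.1}. All remaining verifications are immediate from the symmetric roles of $v_1$ and $v_2$ in the hypotheses.
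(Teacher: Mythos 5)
Your route is the paper's route: the paper's entire proof of this theorem is the one-line observation that existence comes from Theorem~\ref{thm_exist} and uniqueness from two applications of the comparison principle (Lemma~\ref{theorem6.1}), exactly as in your uniqueness paragraph, which is complete and correct — the common initial condition \eqref{intial_cond} supplies the locally uniform vanishing of $v_2-v_1$ as $t\to0^+$ that the lemma's proof uses to pick $\hat t_0$, and the monotonicity hypothesis supplies the technical condition.

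The only place you go beyond the paper is the attempt to upgrade $\partial_x v\ge 0$ to strict positivity, and there a mismatch with the statement surfaces: your strong-maximum-principle argument can only give $\partial_x v>0$ on the non-coincidence set $\{v<1\}$, and indeed strict positivity on all of $Q$ is false for the constructed solution, since Case 2 in the proof of Theorem~\ref{thm_exist} shows $v\equiv 1$ on the open region $\{x>x(y,t)\}$, where $\partial_x v=0$. So neither your argument nor the paper's verifies the condition $\partial_x v>0$ ``in $Q$'' as literally written; it has to be read as $\partial_x v\ge 0$ (which is all Lemma~\ref{theorem6.1} needs, and which does pass to the limit from Lemma~\ref{lem_penalty}), or as strict positivity only where $v<1$. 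With that reading your proposal is complete and coincides with the paper's argument. A cosmetic point you share with the paper: Lemma~\ref{theorem6.1} is stated for $v_i\in\cap_{p>1}W^{2,1}_{p,loc}$ while the theorem assumes only some $p>4$; this is harmless because $p>4$ already yields continuity of $\partial_x v_i$ via parabolic Sobolev embedding, which is what the lemma's proof actually uses.
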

\begin{proof}
The assertion follows by comparison principle (Lemma \ref{theorem6.1}) and Theorem \ref{thm_exist}.
\end{proof}

%\begin{remark}
%Denote by $v^*$ the solution of
%\begin{align}
%\begin{cases}
%\mathcal{B}[v^*]=0 \quad in\quad Q,\\
%v^*(x,s,0)=H(x)\quad for \quad x\in\mathbb{R},s>0.
%\end{cases}
%\end{align}
%\end{remark}
Theorem \ref{theorem6.2}  implies that the derivative of the value function is the unique solution of \eqref{10}. Hence, the trading boundary is fully characterized as it is the level set of derivative. For completeness, we also give the uniqueness of the solution of \eqref{9} with a growth condition.    \begin{theorem}[Uniqueness of $u$]
There exists a unique $u\in C(\bar{Q})\cap C^{2,1}(Q)$ satisfying
\begin{enumerate}[label={(\arabic*)}]
\item
\begin{align}
\quad\begin{cases}
\max\{\mathcal{A}[u],\frac{\partial u}{\partial x}-1\}=0\quad  in\quad Q,\\
u(x,y,0)=x^+\quad for \quad x,y\in\mathbb{R}.
\end{cases}
\end{align}
\item There exists a positive constant $C$ and positive integer $n$ such that
\[0\leq u(x,t)\leq C(e^{yn}+1)(1+|x|)e^{nt}.\]
\item $u\geq0$, $\frac{\partial u}{\partial x}\geq 0$, $\frac{\partial^2u}{\partial x^2}\geq 0$.
\end{enumerate}
\end{theorem}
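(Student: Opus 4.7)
The plan is to reduce uniqueness of $u$ to the already-established uniqueness of its $x$-derivative (Theorem \ref{theorem6.2}). Given two solutions $u_1, u_2$ of the variational inequality satisfying conditions (1)--(3), I set $v_i := \partial_x u_i$. The target is to show that each $v_i$ is a solution of the $v$-equation \eqref{10} in the sense of Theorem \ref{theorem6.2}, so that $v_1 = v_2$, and then to recover $u_1 = u_2$ by integrating back and using $u_i(\cdot,\cdot,0)=x^+$.

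First, I would verify the regularity and equation for $v_i$. Condition (3) gives $v_i\ge 0$ and $\partial_x v_i\ge 0$; the obstacle $\partial_x u_i\le 1$ gives $v_i\le 1$, so $v_i\in L^\infty(Q)$. On the open set $\{v_i<1\}$ the PDE $\mathcal A[u_i]=0$ holds classically, and differentiating in $x$ (justified by bootstrapping $C^{2,1}$ regularity into $W^{3,\frac{3}{2}}_{p,\mathrm{loc}}$ via interior $L^p$ estimates for the quasilinear equation $\mathcal A[u]=0$) yields $\mathcal B[v_i]=0$. On $\{v_i=1\}$ the function $v_i$ is locally constant in $x$ (by $\partial_x v_i\ge 0$ and $v_i\le 1$) so $\mathcal B[v_i]=-(\mu-r)<0$ there, and the obstacle term $v_i-1=0$ saturates the maximum. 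Combining gives $\max\{\mathcal B[v_i],v_i-1\}=0$ a.e.\ in $Q$, and $v_i\in W^{2,1}_{p,\mathrm{loc}}(Q)$.

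Next I would verify the initial condition \eqref{intial_cond} for $v_i$. Since $0\le v_i\le 1$, each $u_i$ is $1$-Lipschitz in $x$, and $u_i(x,y,0)=x^+$. Following the sandwich argument in the proof of Theorem \ref{thm_exist}, I compare $v_i$ with solutions $\underline\varphi,\overline\varphi$ of the linearized problems $\mathcal L[\underline\varphi]+\tfrac{e^y}{m}(1+\iota)\underline\varphi_x=0$ and $\mathcal L[\overline\varphi]=0$ with initial data $\mathbf 1_{\{x>0\}}$; the nonlinearity $\tfrac{e^y}{m}v_i\partial_x v_i$ with $v_i\in[0,1]$ and the bounded extra drift arising from the obstacle contribute only $o(1)$ corrections as $t\to 0$, and the Feynman--Kac representations of $\underline\varphi,\overline\varphi$ both pinch $v_i(x,y,t)$ against $\varphi(x/(\nu\sqrt t))$ uniformly for $|y|\le K$. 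Once \eqref{intial_cond} is established, Theorem \ref{theorem6.2} gives $v_1\equiv v_2$.

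Finally, set $f(y,t):=u_1(x,y,t)-u_2(x,y,t)$, which is independent of $x$ since $\partial_x f=v_1-v_2=0$. From Lemma \ref{lemma5.6} we know that for each $(y,t)$ there is an $x$ with $v(x,y,t)<1$, so at such points both $\mathcal A[u_i]=0$, and the identity $(v_1)^2-(v_2)^2=0$ reduces $\mathcal A[u_1]-\mathcal A[u_2]=0$ to $\mathcal L f=0$. Because $f$ does not depend on $x$, this simplifies to the linear parabolic Cauchy problem
\[
f_t-\tfrac{\sigma^2}{2}f_{yy}-(\mu+\tfrac12\sigma^2)f_y-(\mu-r)f=0,\qquad f(y,0)=0,
\]
with polynomial growth inherited from condition (2); by classical uniqueness for such Cauchy problems $f\equiv 0$, hence $u_1=u_2$. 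The main obstacle is the verification of the Gaussian-profile initial condition \eqref{intial_cond} for a generic solution $u$ of the $u$-equation, since $u$ is only assumed continuous (not smooth) up to $t=0$; this is the step that requires the barrier sandwich above rather than direct differentiation.
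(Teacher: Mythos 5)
Your reduction to the uniqueness of $v$ rests on two steps that are not actually available under the hypotheses, and both are genuine gaps. First, to invoke Lemma \ref{theorem6.1}/Theorem \ref{theorem6.2} for $v_i=\partial_x u_i$ you need $v_i\in W_{p,loc}^{2,1}(Q)$ and $\max\{\mathcal B[v_i],v_i-1\}=0$ a.e.; but the theorem only assumes $u_i\in C^{2,1}(Q)$. Your bootstrap via interior $L^p$ estimates applies only inside the open set $\{\partial_x u_i<1\}$ (and, trivially, in the interior of the coincidence set, where $u_i=x+g(y,t)$), not across the free boundary, so the required Sobolev regularity of $v_i$ on all of $Q$ is never established — and it is exactly what the comparison argument uses, since Lemma \ref{theorem6.1} mollifies $v_i$ and evaluates $\mathcal B[v_i^\varepsilon]$ near a touching point that may sit on the free boundary. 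Second, Theorem \ref{theorem6.2} requires the Gaussian-profile initial condition \eqref{intial_cond}, whereas your $u_i$ only gives $u_i(\cdot,\cdot,0)=x^+$, i.e. $\partial_x u_i(\cdot,\cdot,0)=\mathbf 1_{\{x>0\}}$ in an integrated sense. The sandwich in the proof of Theorem \ref{thm_exist} is carried out for the smooth penalized solutions $v^\varepsilon$, where classical comparison with the barriers is legitimate; for a generic solution the only comparison tool in the paper, Lemma \ref{theorem6.1}, itself presupposes \eqref{intial_cond}, so using it here would be circular, and the sentence about the obstacle contributing ``only $o(1)$ corrections'' is not a substitute for the missing small-time comparison argument for functions of this low regularity. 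You correctly flag this as the main obstacle, but it remains unresolved. (Two smaller points: Theorem \ref{theorem6.2} as stated demands $\partial_x v>0$ strictly while condition (3) only yields $\ge 0$, so you would have to argue through the technical condition of Lemma \ref{theorem6.1} instead; and the existence, for each $(y,t)$, of some $x$ with $\partial_x u<1$ should be deduced from $u\ge0$, $\partial_x u\ge0$ and the growth bound, not from Lemma \ref{lemma5.6}, which concerns the penalized solutions.)

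The paper avoids both difficulties by proving uniqueness directly at the level of $u$: from $u\ge 0$, $0\le \partial_x u\le 1$ and the growth bound it deduces that $u\to 0$ as $x\to-\infty$ uniformly on $y$-compacta, then compares two solutions through the weight $\psi=e^{\kappa t}[e^{Ny}+e^{-Ny}+e^{x}]$, and at an interior touching point obtains simultaneously $\mathcal A[u_1]\le \mathcal A[u_2]-\eta\psi$ and $\partial_x u_1=\partial_x u_2-\eta\,\partial_x\psi$, forcing $\max\{\mathcal A[u_1],\partial_x u_1-1\}<0$, a contradiction. This needs neither third derivatives of $u$ nor the refined profile \eqref{intial_cond}. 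Your final step (once $v_1=v_2$, the $x$-independent difference solves a linear Cauchy problem with zero data and admissible growth, hence vanishes) is sound, but it hangs on the unproved reduction above; to salvage your route you would have to prove $W^{2,1}_{p,loc}$ regularity of $\partial_x u$ across the free boundary and a small-time barrier comparison yielding \eqref{intial_cond} for generic solutions.
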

\begin{proof}
Let $u$ be a generic solution that satisfies (i), (ii), (iii). Then there exists $x(y,t)$ such that
\[\frac{\partial u}{\partial x}=1 \quad \mbox{if} \quad x\geq x(y,t),\]
\[0\leq \frac{\partial u}{\partial x}<1\quad \mbox{if} \quad x<x(y,t).\]
Note that for $x<0$,
\[u(x,y,t)=u(0,y,t)-\int_x^0\frac{\partial u}{\partial z}(z,y,t)dz.\]
If $x(y,t)<0$, we have
\[0\leq u(x(y,t),y,t)\leq u(0,y,t)+x(y,t).\]
Thus,
\[x(y,t)\geq-u(0,y,t)\geq-C(1+e^{yn})e^{nt}.\]
It follows that
\begin{align}
\begin{cases}
\mathcal{A}[u]=0\quad  \mbox{for} \quad y \in \mathbb R,\quad t>0,\quad x<-C(1+e^{ny})e^{nt}\\
u(x,y,0)=0\quad\mbox{for}  \quad y\in \mathbb R,\quad x<0.
\end{cases}
\end{align}
This implies
\begin{align}
\lim\limits_{x\rightarrow-\infty}\sup_{y\leq M,0\leq t\leq T}u(x,y,t)=0\quad\forall M>0.\label{cc}
\end{align}

Now let $u_1$ and $u_2$ be two solutions. Assume that $u_1\not\equiv u_2$. Then there exists $p=(x_0,y_0,t_0)\in Q$ such that $u_2(p)\neq u_1(p)$. Assume that $u_2(p)>u_1(p)$. Now for a constant $\kappa$ to be defined later, and $N=n+1$, denote
\[\eta:=\sup_{\Omega\times[0,t_0]}\frac{u_2-u_1}{e^{\kappa t}[e^{Ny}+e^{-Ny}+e^x]}>0.\]
By the growth condition, there exists $M>0$, such that
\[\eta=\sup_{x\in\mathbb{R},y\in[\frac{1}{M},M],t\in[0,t_0]}\frac{u_2-u_1}{e^{\kappa t}[e^{Ny}+e^{-Ny}+e^x]}.\]
Finally, in view of (\ref{cc}), for $u=u_1$ and $u=u_2$, we find that there exists $p_1=(x_1,y_1,t_1)\in \mathbb{R}\times[\frac{1}{M},M]\times(0,t_0]$ such that the maximum is obtained at $p_1$.

Now define
\[\psi=e^{\kappa t}[e^{Ny}+e^{-Ny}+e^x].\]
Then we have
\[u_2\leq u_1+\eta\psi\quad \mbox{in} \quad \Omega\times[0,t_0],\]
\[u_2=u_1+\eta\psi\quad \mbox{at} \quad p_1.\]
Consequently,
\begin{align}
0&\geq [\mathcal{A}[u_1+\eta \psi]-\mathcal{A}[u_2]]|_{p_1}\notag\\
&=\Big[\mathcal{A}[u_1]-\mathcal{A}[u_2]+\eta\Big(\mathcal{L}\psi+\frac{e^y}{m}\frac{\partial u_1}{\partial x}\frac{\partial\psi}{\partial x}+\frac{e^y}{2m}\eta(\frac{\partial \psi}{\partial x})^2\Big)\Big]\Big|_{p_1}\notag\\
&\geq[\mathcal{A}[u_1]-\mathcal{A}[u_2]+\eta\mathcal{L}\psi]|_{p_1}.\notag
\end{align}
We can calculate that 
\begin{align}
\mathcal{L}\psi&=e^{Ny}e^{\kappa t}[\kappa-\frac{\sigma^2}{2}N^2-(\mu+\frac12\sigma^2)N-(\mu-r)]\notag\\
&\quad+e^{-Ny}e^{\kappa t}[\kappa-\frac{\sigma^2}{2}N^2+(\mu+\frac12\sigma^2)N-(\mu-r)]\notag\\
&\quad+e^{x+\kappa t}[\kappa-\frac{\nu^2}{2}-(\mu-r)]\geq\psi\geq1,\notag
\end{align}
if \[\kappa=1+\max\Big\{\frac{\sigma^2}{2}N^2+(\mu+\frac12\sigma^2)N+(\mu-r),\frac{\nu^2}{2}+\mu-r\Big\}.\]
This implies
\[\mathcal{A}[u_1](p_1)\leq\mathcal{A}[u_2](p_1)-\eta \psi(p_1)\leq\mathcal{A}[u_2](p_1)-\eta \frac{\partial \psi}{\partial x}(p_1).\]
Since
\[\max_{\Omega\times[0,t_0]}\{u_2-u_1-\eta\psi\}=0=u_2(p_1)-u_1(p_1)-\eta\psi(p_1),\]
we then have
\[\frac{\partial u_1}{\partial x}(p_1)=\frac{\partial u_2}{\partial x}(p_1)-\eta\frac{\partial\psi}{\partial x}(p_1).\]
Consequently,
\[\max\{\mathcal{A}[u_1](p_1),\frac{\partial u_1}{\partial x}(p_1)-1\}\leq\max\{\mathcal{A}[u_2](p_1),\frac{\partial u_2}{\partial x}(p_1)-1\}-\eta\frac{\partial\psi}{\partial x}(p_1).\]
But this is impossible since both $u_1$ and $u_2$ are solutions. This impossibility implies that $u_1\equiv u_2$.
\end{proof}

\section{Numerical Results} \label{sect_numer}
The above results makes the problem solved in theory to a certain extent, but the practical application requires us to give the specific description of the value function. In this section, we present a numerical example of problem (\ref{888}). The values of the parameters are as follows, some of which are selected from \cite{A19}.
\begin{table}[h]
\centering
\caption{Values of parameters}
\begin{tabular}{ccccccccc}
 \hline
 $\mu$ & $\sigma$ & $\nu$ & $m$ & $T$  & r\\
  \hline
  0.05 & 0.2 & 0.5 & 0.3 & 1 & 0.03  \\
  \hline
\end{tabular}
\label{t1}
\end{table}

Figure 1 shows the relationship between the value function $\Phi$ at $t=0$, initial carbon emission over allowance $X_0$ and initial unit price of purchase carbon allowance $S_0$. When $X_0$ is less than 0, which means that there are surplus carbon allowances at this time, the enterprise does not need to take emission reduction actions, and the emission reduction cost is 0. The minimum expectation of cost increases when $X_0$ or $S_0$ increases. This indicates that higher levels of excess emissions result in a more substantial reduction burden. Consequently, the cost of emission reduction for the company escalates. Moreover, a higher unit price for purchasing carbon allowances translates to increased costs in acquiring them. Therefore, the overall expense for companies in reducing emissions also increases. In addition, higher unit price of purchase carbon allowance means higher cost of purchasing carbon allowances. Therefore, the higher the cost of reducing emissions for companies will be. Figure 2 shows the location of the free boundary surface. It can be seen from the figure that for a fixed time $t$, the free boundary increases with the price of carbon allowances. This implies that with a higher price for allowances, the enterprise is less inclined to diminish the excess emissions by purchasing additional quotas.%, which is consistent with Theorem \ref{theorem7.1}.
\begin{figure}[ht]
\centering
\begin{minipage}[b]{0.42\linewidth}
\includegraphics[scale=0.4]{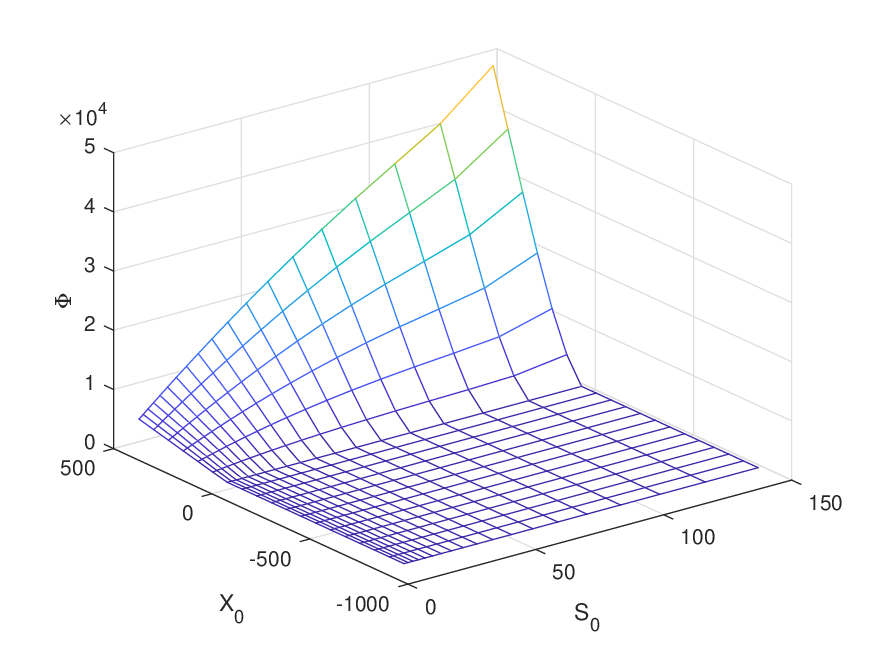}
\caption{Relationship between $\Phi$, $X_0$ and $S_0$}
\label{p1}
\end{minipage}
\quad
\begin{minipage}[b]{0.42\textwidth}
\includegraphics[scale=0.4]{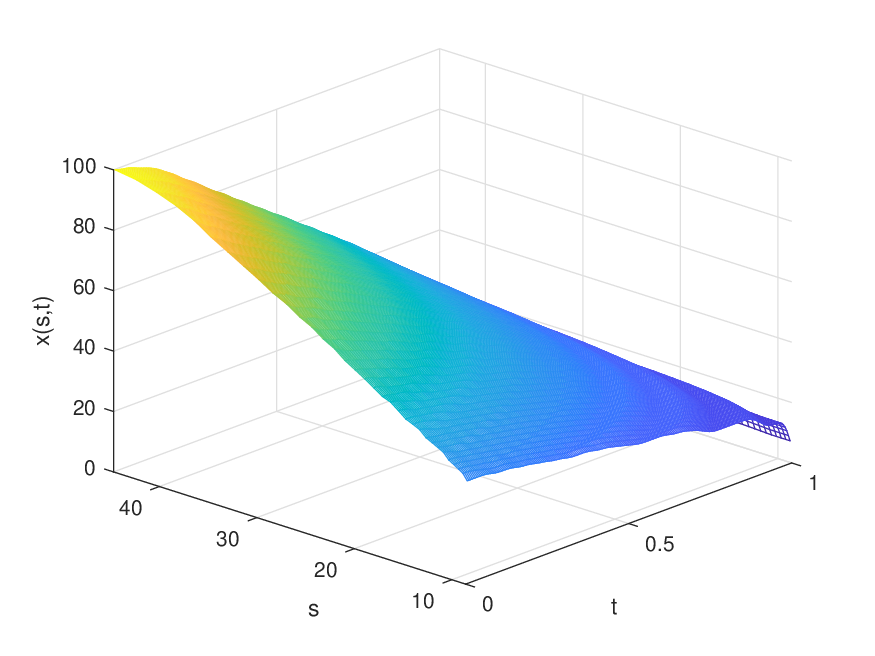}
\caption{Free boundary surface}
\label{p2}
\end{minipage}
\end{figure}

Figure 3 shows the numerical results of (\ref{10}) at $t=T$. In addition, from the previous discussion, we know that the optimal self-control strategy $a_t^*=\frac{1}{2m}\frac{\partial \Phi}{\partial x}(X_t,S_t,t)=\frac{S_{T-t}}{2m}v(X_{T-t},S_{T-t},T-t)$. Therefore, we can calculate the optimal self-control strategy as shown in Figure 4.  From Figure 4, it's evident that when there is a surplus of carbon allowances (i.e., $X_0<0$), the enterprise isn't required to undertake emission reduction measures. As $X_0$ increases, the enterprise's optimal self-regulation becomes more significant, reflecting the increased task of emission reduction. However, once $X_0$ reaches a certain threshold, the optimal control of the enterprise remains unchanged. At this point, the cost of self-regulation exceeds the cost of purchasing carbon allowances, making the latter the optimal choice for the enterprise. Additionally, for a fixed $X_0$, as $S_0$ increases, the enterprise's optimal self-regulation also increases. In such instances, the higher cost associated with purchasing carbon allowances makes enterprises more inclined to adopt self-regulation strategies.

\begin{figure}[ht]
\centering
\begin{minipage}[b]{0.42\linewidth}
\includegraphics[scale=0.4]{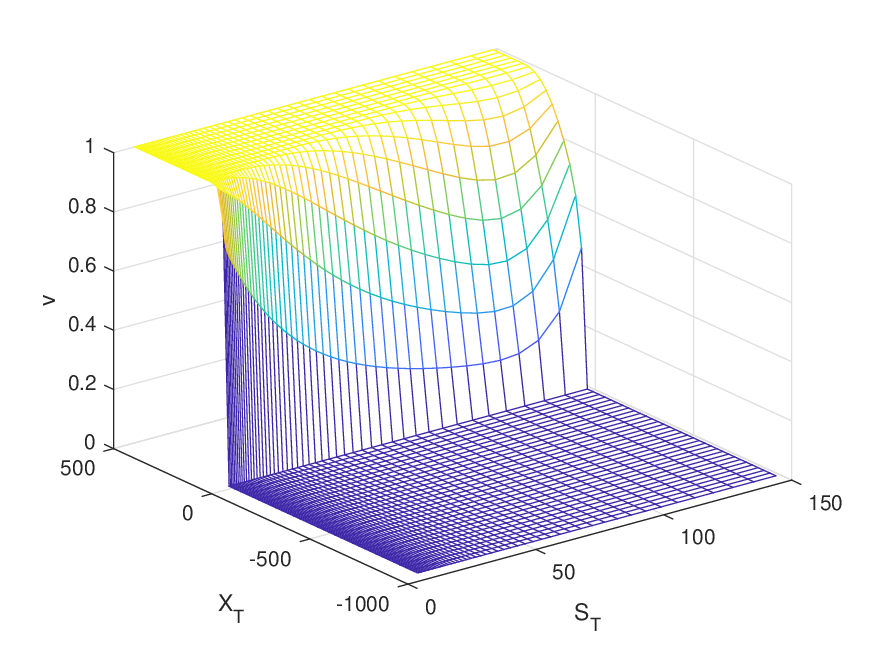}
\caption{Relationship between $v$, $X_0$ and $S_0$}
\label{p3}
\end{minipage}
\quad
\begin{minipage}[b]{0.42\textwidth}
\includegraphics[scale=0.4]{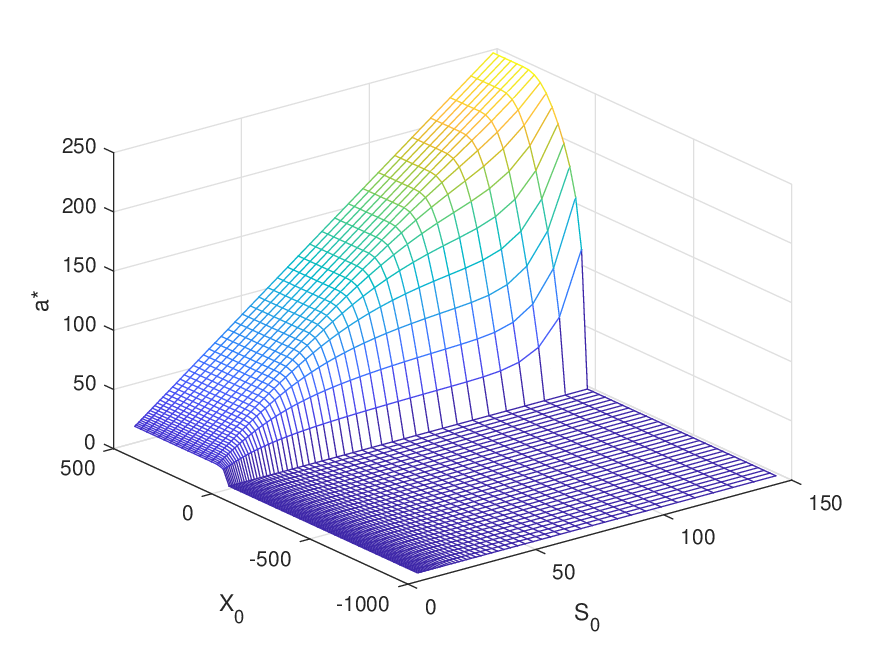}
\caption{Optimal self-control strategy}
\label{p4}
\end{minipage}
\end{figure}
Figures 5-8 show the relationship between the value function and the parameters. A larger $m$ means lower emission reduction efficiency, so the cost of emission reduction will be higher. Therefore, from the perspective of the long-term development of the company, the introduction of advanced emission reduction technology will help reduce the cost of emission reduction. In addition, the increase in the volatility $\nu$ of excess emissions will also increase the emission reduction cost of the enterprise. In terms of the price of carbon allowances, the higher the average growth rate $\mu$, the higher the emission reduction cost, while the higher the volatility $\sigma$, the lower the emission reduction cost.
\begin{figure}[H]
\centering
\begin{minipage}[b]{0.42\linewidth}
\includegraphics[scale=0.4]{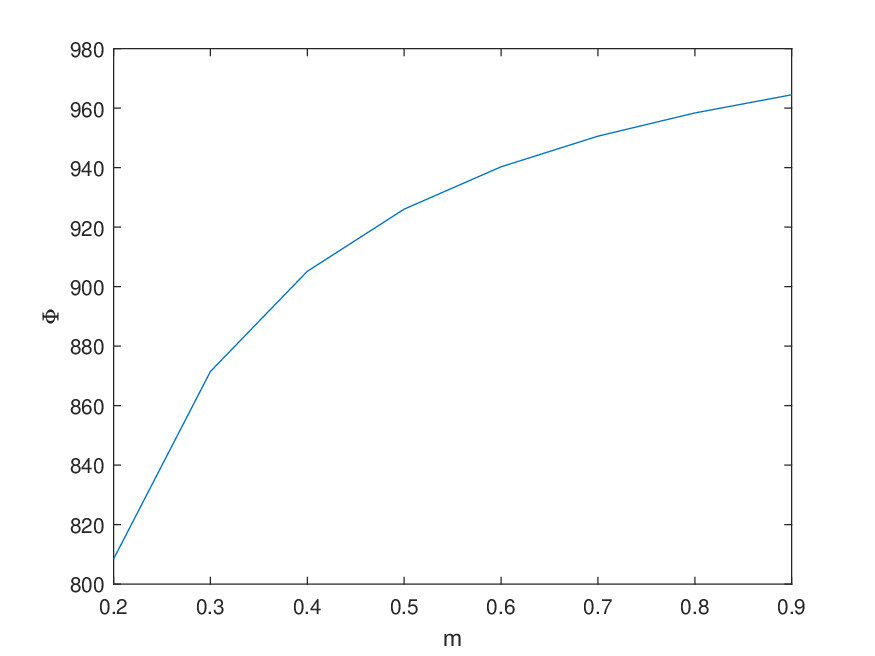}
\caption{Relationship between $\Phi$ and $m$}
\label{p5}
\end{minipage}
\quad
\begin{minipage}[b]{0.42\textwidth}
\includegraphics[scale=0.4]{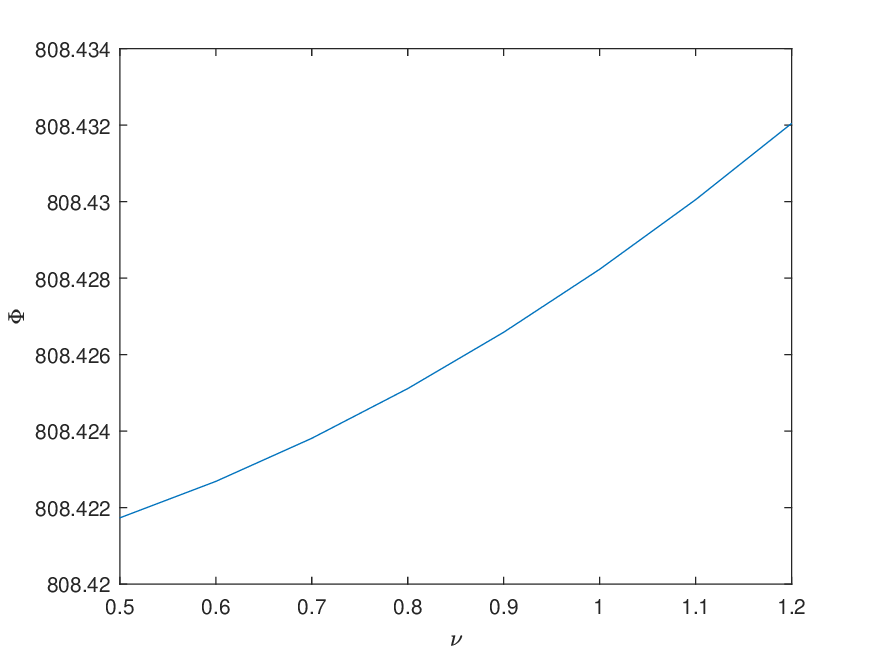}
\caption{Relationship between $\Phi$ and $\nu$}
\label{p6}
\end{minipage}
\end{figure}

\begin{figure}[H]
\centering
\begin{minipage}[b]{0.42\linewidth}
\includegraphics[scale=0.4]{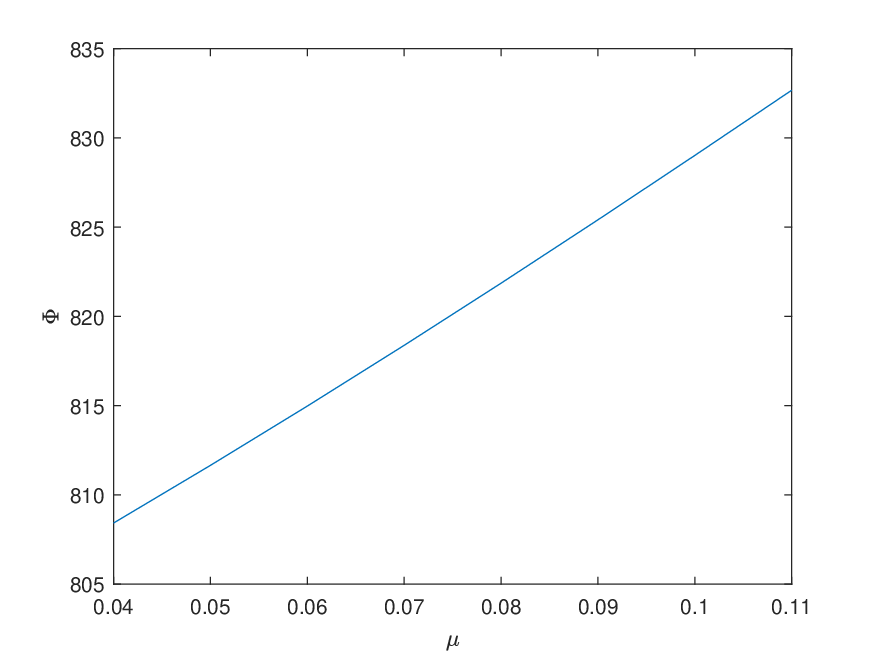}
\caption{Relationship between $\Phi$ and $\mu$}
\label{p7}
\end{minipage}
\quad
\begin{minipage}[b]{0.42\textwidth}
\includegraphics[scale=0.4]{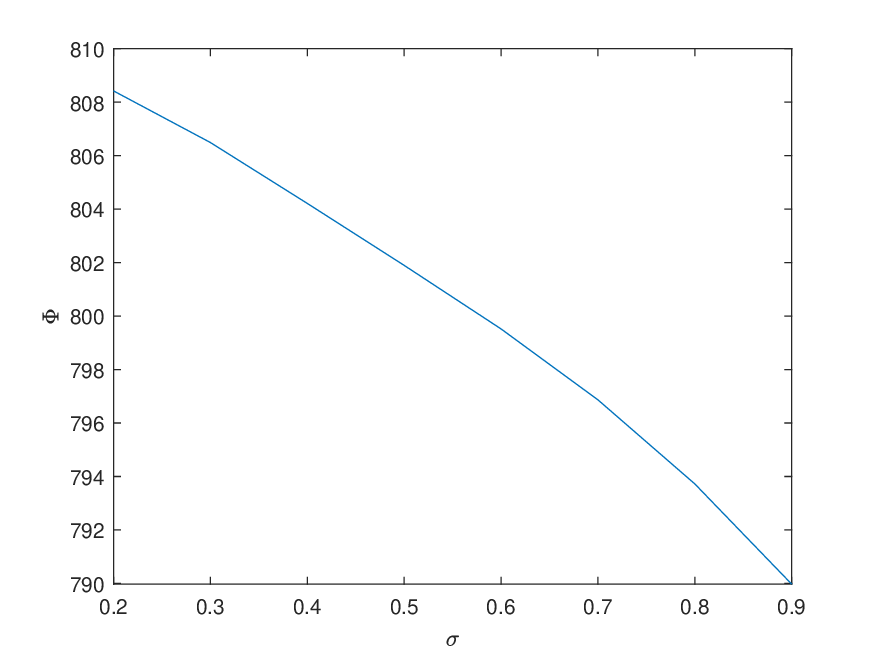}
\caption{Relationship between $\Phi$ and $\sigma$}
\label{p8}
\end{minipage}
\end{figure}

\section{Concluding remarks}
We study an optimal stochastic control problem for carbon emission reduction of enterprises, in which the enterprise can simultaneously reduce its emissions and buy
carbon allowances at any time. Then we establish a model with the goal of minimizing the total cost of emissions reduction for the enterprise. Because the enterprise can purchase the carbon allowances at any time, the problem is transformed from a standard stochastic control problem in the previous study into a singular control problem. We then derived the corresponding HJB equation, which is a parabolic variational inequality on two dimensional state space with gradient barrier, so that the free boundary is a surface. By proving a verification theorem, we show that the solution of the variational inequality is the value function.  By means of the penalty method and the comparison principle, we get the existence and uniqueness of the solution, so the problem is solved theoretically to a certain extent. Finally, we give the visual description of value function and free boundary by numerical results.

%\section*{Acknowledgments}

\bibliographystyle{siamplain}
\bibliography{references}

\end{document}